\title[increasing digits]{Increasing digit subsystems of infinite iterated function systems}
\date{}
\author{Thomas Jordan}
\address{Thomas Jordan\\Department of Mathematics\\ The University of Bristol\\
University Walk\\Clifton\\ Bristol\\BS8 1TW\\UK}
\email{thomas.jordan@bristol.ac.uk}
\author{Micha\l\ Rams}
\address{Micha\l\ Rams\\Institute of Mathematics\\ Polish Academy of Sciences\\ ul.
\'Sniadeckich 8, 00-956 Warszawa\\ Poland }
\email{M.Rams@impan.gov.pl}
\thanks{The research of M.R. was supported by grants EU FP6 ToK SPADE2, EU FP6 RTN
CODY and MNiSW grant 'Chaos, fraktale i dynamika konforemna'. }
\theoremstyle{plain}
\newtheorem{lem}{Lemma}[section]
\newtheorem{thm}[lem]{Theorem}
\newtheorem{cor}[lem]{Corollary}
\theoremstyle{definition}
\theoremstyle{remark}
\numberwithin{equation}{section}
\newcommand{\R}{\mathbb R}
\newcommand{\N}{\mathbb N}
\renewcommand{\epsilon}{\varepsilon}
\newcommand{\dimp}{\dim_{P}}
\newcommand{\dimh}{\dim_{H}}
\begin{document}
\begin{abstract}
We consider an infinite iterated function system
$\{f_i\}_{i=1}^{\infty}$ on $[0,1]$ with a polynomially increasing
contraction rate. We look at subsets of such systems where we only
allow iterates $f_{i_1}\circ f_{i_2}\circ f_{i_3}\circ\cdots$ if
$i_n>\Phi(i_{n-1})$ for certain increasing functions
$\Phi:\N\rightarrow\N$. We compute both the Hausdorff and packing
dimensions of such sets. Our results generalise work of Ramharter which shows that the set of continued fractions with
strictly increasing digits has Hausdorff dimension $\frac{1}{2}$.
\end{abstract}
\maketitle

\def\thefootnote{}
\footnote{2010{\it Mathematics
Subject Classification}: Primary 28A80, Secondary 11K50}
\def\thefootnote{\arabic{footnote}}
\setcounter{footnote}{0}

\section{Introduction}
In this paper we consider certain subsets of the attractors of infinite iterated function systems. 
For each $n\in\N$ we will let $f_n:[0,1]\rightarrow [0,1]$ be
$C^1$ maps such that
\begin{enumerate}
\item\label{one} There exists $m\in\N$ and $0<A<1$ such that for
all $(a_1,\ldots,a_m)\in\N^m$ and for all $x\in [0,1]$
$$0<|(f_{a_1}\circ\cdots\circ f_{a_m})'(x)|\leq A<1.$$
\item\label{two} For any $i,j\in\N$ $f_i((0,1))\cap
f_j((0,1))=\emptyset$. \item\label{three} There exist $d>1$ such
that for any $\varepsilon>0$ there exist
$C_1(\varepsilon),C_2(\varepsilon)>0$ such that for $i\in\N$ there
exist constants $\lambda_i, \xi_i$ such that for all $x\in [0,1]$
$\xi_i\leq |f_i'(x)|\leq\lambda_i$ and
$$\frac{C_1}{i^{d+\varepsilon}}\leq\xi_i\leq\lambda_i\leq\frac{C_2}{i^{d-\varepsilon}}$$
\end{enumerate}
We will call such a system a $d$-decaying system.

There will be a natural projection $\Pi:\N^{\N}\rightarrow [0,1]$
defined by
$$\Pi(\underline{a})=\lim_{n\rightarrow\infty}f_{a_1}\circ\cdots\circ f_{a_n}(1).$$
We will denote $\Lambda=\Pi(\N^{\N})$ as the attractor of the
system. We will let $T:\Lambda\rightarrow \Lambda$ be the
expanding map defined by $T(x)=f_i^{-1}(x)$ if $x\in f_i([0,1])$.
If $x=\Pi(\underline{a})$ then we will refer to $\{a_n\}_{n\in\N}$
as the digits of $x$ (these are not necessarily unique). For
brevity of notation for $x\in\Lambda$, $\{a_i(x)\}_{i\in\N}$ will
denote a sequence $\underline{a}\in\N^{\N}$ such that
$\Pi(\underline{a})=x$. 

We are interested in the set of $x$ where
the digits are increasing monotonically. For an function
$\Phi:\N\rightarrow\R$ which satisfies that $\Phi(n)\geq n$ we
will denote
\begin{equation}\label{def1}
X_{\Phi}=\Pi\{\underline{a}:a_{n+1}>\Phi(a_n)\text{ for all }n\in\N\}.
\end{equation}
We will be looking at what the dimension of these sets for various
different notions of dimension. We will be considering Hausdorff
dimension, denoted $\dimh$, packing dimension, denoted $\dimp$ and
upper box counting dimension denoted $\overline{\dim}_B$. For the
definitions of these notions of dimension the reader is referred
to \cite{Falcbook}. Our first result is the following
\begin{thm} \label{thm:main}
Let $\Phi:\N\rightarrow\R$ satisfy
that for some $\beta\geq 1$ we have $n\leq\Phi(n)\leq\beta n$ for all
$n\in N$. We then have that
$$\dimh X_{\Phi}=\frac{1}{d}.$$
\end{thm}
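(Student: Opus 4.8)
The plan is to prove the two inequalities $\dimh X_\Phi\le \tfrac1d$ and $\dimh X_\Phi\ge\tfrac1d$ separately: the first by an explicit cover, the second by constructing a well-separated subsystem carrying a measure to which the mass distribution principle applies.

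For the upper bound, observe that the condition $a_{n+1}>\Phi(a_n)\ge a_n$ forces every admissible word to be strictly increasing, so $X_\Phi$ lies in the image of the strictly increasing sequences. I would cover it by the level-$n$ cylinders $f_{a_1}\circ\cdots\circ f_{a_n}([0,1])$ with $a_1<\cdots<a_n$. By the uniform contraction property~(1) these have diameter at most $A^{\lfloor n/m\rfloor}\to0$, so they give admissible covers for $\mathcal H^s_\delta$ once $n$ is large, while the decay property~(3) and the chain rule give $\diam\le\prod_{k=1}^nC_2a_k^{-(d-\varepsilon)}$. Fix $s>\tfrac1d$ and choose $\varepsilon$ so small that $(d-\varepsilon)s>1$. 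Then
$$\sum_{a_1<\cdots<a_n}\Big(\prod_{k=1}^nC_2a_k^{-(d-\varepsilon)}\Big)^s=C_2^{ns}\,e_n\big(1^{-(d-\varepsilon)s},2^{-(d-\varepsilon)s},\dots\big)\le C_2^{ns}\,\frac{\zeta((d-\varepsilon)s)^n}{n!},$$
where $e_n$ is the $n$-th elementary symmetric function. As the factorial beats the exponential growth, this tends to $0$, so $\mathcal H^s(X_\Phi)=0$ and $\dimh X_\Phi\le s$; letting $s\downarrow\tfrac1d$ gives the bound.

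For the lower bound I would build a subsystem adapted to $\Phi$. Choose integers $N_1<N_2<\cdots$ with $N_{k+1}\ge2\beta N_k$ and set $B_k=\{N_k,\dots,2N_k-1\}$; then for $a_k\in B_k$ one has $\Phi(a_k)\le\beta a_k<2\beta N_k\le N_{k+1}\le a_{k+1}$, so every sequence with $a_k\in B_k$ is admissible and $Y=\Pi\{\underline a:a_k\in B_k\}\subseteq X_\Phi$. On $Y$ I place the measure $\mu$ that at each level distributes mass uniformly among the $|B_k|\asymp N_k$ offspring, so a level-$n$ cylinder has mass $\prod_{k\le n}N_k^{-1}$ and, by~(3), diameter $D_n\asymp\prod_{k\le n}N_k^{-d}$ (all digits in $B_k$ being comparable to $N_k$). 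Fixing $s<\tfrac1d$, I would estimate $\mu(B(x,r))$ for $D_{n+1}\le r<D_n$: since by disjointness~(2) and the lower derivative bound in~(3) the level-$(n+1)$ cylinders inside a parent are disjoint intervals of length $\gtrsim D_nN_{n+1}^{-d-\varepsilon}$, a ball of radius $r$ meets at most $rN_{n+1}^{d+\varepsilon}/D_n+1$ of them, whence the dominant contribution is $\mu(B(x,r))\lesssim r^s\prod_{k\le n}N_k^{ds-1}\,N_{n+1}^{d-1+\varepsilon}$. Because $ds-1<0$ the product $\prod_{k\le n}N_k^{ds-1}$ decays like a Gaussian in $n$ while $N_{n+1}^{d-1+\varepsilon}$ grows only exponentially, so the right-hand side is $\lesssim r^s$ for all small $r$; the mass distribution principle then yields $\dimh Y\ge s$, and $s\uparrow\tfrac1d$ finishes the proof.

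The main obstacle is the lower bound, and within it the estimate of $\mu(B(x,r))$ in the transition regime $r\approx D_n$, where a single ball can capture all $N_{n+1}\to\infty$ offspring of a parent cylinder. What makes this work is that the disjointness together with the uniform lower bound $\xi_i$ on $|f_i'|$ forces the offspring to occupy only a fraction $\asymp N_{n+1}^{1-d}$ of the parent, keeping the number of cylinders a ball can meet small enough that the super-exponential decay of $\prod_{k\le n}N_k^{ds-1}$ dominates. Verifying these separation estimates carefully, and confirming that the geometric block growth $N_k\asymp(2\beta)^k$ does not pull the dimension below $\tfrac1d$, is where the real work lies.
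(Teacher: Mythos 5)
Your argument is correct, and both halves depart from the paper's route. For the upper bound the paper covers $X_\Phi$ by finitely many bi-Lipschitz images of the tail attractors $\Lambda_k$ and invokes Lemma \ref{subsyst} ($\dim_H\Lambda_k\to 1/d$); you instead sum $|\mathcal{C}|^s$ directly over strictly increasing words and exploit the bound $e_n(x_1,x_2,\dots)\le (\sum_i x_i)^n/n!$ for elementary symmetric functions, so that for $(d-\varepsilon)s>1$ the total is $\le (C_2^s\zeta((d-\varepsilon)s))^n/n!\to 0$. This is a genuinely more elementary and self-contained proof of the upper bound --- it avoids Bowen's formula and the subsystem lemma entirely --- though it buys less than the paper's version, which is reused for Theorem \ref{packing}. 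For the lower bound both proofs are Moran-type constructions with geometrically growing digit blocks; the difference is that the paper calibrates its blocks $(\Phi(l_n),l_{n+1}]$ by the condition $\sum\xi_i^{1/d-\epsilon}\ge 1$ and puts mass $\xi_i^{s_n}$ with $s_n$ solving a pressure equation, whereas you take explicit blocks $B_k=\{N_k,\dots,2N_k-1\}$ with $N_{k+1}\ge 2\beta N_k$ and the uniform measure. Your version is simpler to state, and the mechanism that makes either work is the one you identify: since the digits grow at most geometrically, $-\log|\mathcal{C}_n(x)|\asymp n^2$, so all the per-level error factors (the $C_1,C_2$ constants, the number $\lesssim rN_{n+1}^{d+\varepsilon}/D_n$ of offspring a ball can meet, neighbouring cylinders of non-comparable length) contribute only $\exp(O(n)+O(\varepsilon n^2))$ to $\mu(B(x,r))/r^s$ and are absorbed by the Gaussian decay of $\prod_{k\le n}N_k^{ds-1}$, provided $\varepsilon$ is chosen small relative to $1-ds$. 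You flag exactly this verification as the remaining work, and it does go through; the paper's Lemma \ref{lem:rn} plays the analogous bookkeeping role there.
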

Considering packing dimension instead of Hausdorff dimension we
obtain the following, stronger result:
\begin{thm}\label{packing}
Let $s_0=\overline{\dim}_B(\{f_i(0)\}_{i=1}^{\infty})$ and
$\Phi:\N\rightarrow\R$ satisfy that $\Phi(n)\geq n$ then we have
that
$$\dimp X_{\Phi}=\max\left\{s_0,\frac{1}{d}\right\}.$$
\end{thm}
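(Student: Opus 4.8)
The plan is to use the characterization of packing dimension as the modified upper box dimension (see \cite{Falcbook}): for any set $E$ one has $\dimp E = \inf\{\sup_j \overline{\dim}_B E_j : E \subseteq \bigcup_j E_j\}$, together with the fact that upper box dimension is invariant under taking closures. Throughout I write $C=[a_1,\dots,a_n]$ for a cylinder of $X_\Phi$ and record that, since the condition $a_{k+1}>\Phi(a_k)$ depends on finitely many coordinates, the symbolic set $\Sigma_\Phi=\{\underline a:a_{k+1}>\Phi(a_k)\ \forall k\}$ is closed in the Polish space $\N^\N$ and hence is a Baire space, with $\Pi:\Sigma_\Phi\to X_\Phi$ continuous and surjective.

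\emph{Lower bound.} I would first show that for every cylinder $C$ with final digit $K$ one has $\overline{\dim}_B(X_\Phi\cap C)\ge\max\{s_0,1/d\}$. For the bound $\ge s_0$, observe that the points $f_C(f_j(0))$ with $j>\Phi(K)$ are accumulation points of $X_\Phi\cap C$ (approached by letting the subsequent digits tend to infinity), and that this set is a bi-Lipschitz image of the tail $\{f_j(0):j>\Phi(K)\}$, whose upper box dimension equals $s_0$ because deleting finitely many points leaves box dimension unchanged. For the bound $\ge 1/d$, I would build inside $C$ a Cantor subset $Y\subseteq X_\Phi$ by branching, at each step from a cylinder with current digit $\asymp K_m$, over the block of digits lying in $(\Phi(K_m),2\Phi(K_m)]$; by condition (3) this produces $\asymp\prod_{m\le N}K_m$ cylinders of diameter $\asymp\prod_{m\le N}K_m^{-d}$ that are separated at their own scale, so $\overline{\dim}_B Y\ge 1/d$. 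The key point is that $\log(\text{cardinality})$ and $\log(\text{diameter})$ both grow like $\sum_m\log K_m$, so their ratio is $1/d$ no matter how fast $\Phi$ forces the digits to grow. Granting this cylinder estimate, the lower bound follows by Baire category: given any countable cover $X_\Phi=\bigcup_j E_j$, pulling back through $\Pi$ and applying the Baire property to $\bigcup_j\overline{\Pi^{-1}(E_j)}=\Sigma_\Phi$ yields an index $j$ and a symbolic cylinder on whose projection $\overline{E_j}$ is dense; hence $\overline{\dim}_B E_j=\overline{\dim}_B\overline{E_j}\ge\overline{\dim}_B(X_\Phi\cap C)\ge\max\{s_0,1/d\}$, and taking the infimum over covers gives $\dimp X_\Phi\ge\max\{s_0,1/d\}$.

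\emph{Upper bound.} Here I would exhaust $X_\Phi$ by the sets $X_\Phi^{(k)}=\{x\in X_\Phi:a_n(x)\le g_k(n)\ \forall n\}$ for a suitable increasing family of growth bounds $g_k$, so that $\bigcup_k X_\Phi^{(k)}=X_\Phi$; by the modified-box characterization it then suffices to prove $\overline{\dim}_B X_\Phi^{(k)}\le\max\{s_0,1/d\}$ for each $k$. To estimate the number of $\delta$-boxes meeting $X_\Phi^{(k)}$ I would cover recursively: within each cylinder $C$ of diameter $r>\delta$ the children $[C,j]$ of diameter $\ge\delta$ are treated individually and continued, while the union of the smaller children is covered using that their endpoints form a rescaled copy of a tail of $\{f_j(0)\}$ and therefore need only $\asymp(r/\delta)^{s_0+o(1)}$ boxes at scale $\delta$. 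Solving the resulting recursion, the ``vertical'' (deep) part of the tree is governed by the exponent $1/d$, exactly as in the Cantor computation above, while the ``horizontal'' (endpoint-accumulation) part contributes $s_0$; the two combine to give $N(\delta)\le\delta^{-\max\{s_0,1/d\}-o(1)}$ under the growth restriction.

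I expect the main obstacle to be precisely this box-counting estimate. Because the alphabet is infinite and the contraction ratios $\asymp j^{-d}$ vary with the digit, a covering at a fixed scale $\delta$ necessarily cuts across many generations of the tree, and one must split the children of each cylinder into a resolved part (to recurse on) and an unresolved accumulating part (to cover by the $s_0$-estimate) while keeping the errors from compounding faster than subexponentially. Controlling this interplay, and choosing the bounds $g_k$ so that the $\limsup$ over scales is not spoiled at sparse bad scales -- which is exactly what forces the passage from upper box dimension to packing dimension -- is where the real work lies; one must show that the two clean regimes, fast digit growth (where depth and the exponent $1/d$ dominate) and widely spread endpoints (where $s_0$ dominates), are the only possibilities.
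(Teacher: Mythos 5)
Your lower bound is essentially sound, and it takes a genuinely different (and self-contained) route from the paper: you prove $\overline{\dim}_B(X_\Phi\cap C)\ge\max\{s_0,1/d\}$ on every cylinder and then invoke Baire category in the symbolic space to beat any countable cover, whereas the paper gets the $1/d$ part from a mass-distribution lemma (a measure $\nu$ on a subset $Y_{\Phi,\epsilon}\subset X_\Phi$ with $\limsup_{r\to0}\log\nu(B(x,r))/\log r\ge 1/d-\epsilon$ a.e.) and the $s_0$ part by showing $\dimp X_\Phi=\overline{\dim}_B X_\Phi$ via the argument of Theorem 3.1 of Mauldin--Urba\'nski and then observing that the closure of $X_\Phi$ contains $\{f_i(v)\}$ for an accumulation point $v$. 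Two small repairs: the accumulation points of $X_\Phi\cap C$ you want are $f_C(f_j(v))$ for $v$ an accumulation point of $\{f_m(0)\}$ (or $\{f_m(1)\}$), not $f_C(f_j(0))$ --- the tails $f_m([0,1])$ need not accumulate at $0$; this costs nothing since, as the paper notes, replacing $0$ by any point of $[0,1]$ leaves $s_0$ unchanged. And in the Cantor construction you must check that the generation-$N$ cylinders have diameters comparable to the covering scale (they vary by a factor $(2^dC)^N(\prod a_m)^{O(\epsilon)}$, which is $e^{o(|\log\mathrm{diam}|)}$, so this works but should be said).

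The genuine gap is in the upper bound. The entire content of that half is the box-counting estimate $N(\delta)\le\delta^{-\max\{s_0,1/d\}-o(1)}$ obtained by splitting each cylinder's children into a resolved part (recursed on, contributing the exponent $1/d$) and an unresolved accumulating part (covered via the endpoint set, contributing $s_0$), and you explicitly defer it as ``where the real work lies.'' That estimate is not routine --- it is precisely the paper's Lemma \ref{subsystpack} --- and without it you have no upper bound at all. Moreover, your framing makes it harder than it needs to be: there is no need to exhaust $X_\Phi$ by digit-growth classes $X_\Phi^{(k)}$ and then worry about sparse bad scales spoiling the $\limsup$. Since the digits of any $x\in X_\Phi$ are strictly increasing, at most $k$ of them can be $\le k$ and they all occur at the start, so
\[
X_\Phi\subset\bigcup_{l\leq k}\ \bigcup_{a_1<\cdots<a_l\leq k} f_{a_1}\circ\cdots\circ f_{a_l}(\Lambda_{k}),
\]
a finite union of bi-Lipschitz images of the tail attractor $\Lambda_k$ of $\{f_i\}_{i\ge k}$. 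By countable stability and bi-Lipschitz invariance of packing dimension this reduces everything to bounding $\dimp\Lambda_k$ for large $k$, i.e.\ to a single box-counting estimate for the full tail system with no monotonicity condition in sight; that is how the paper proceeds, and you should either carry out your recursion in full or adopt this reduction and then prove the $\Lambda_k$ estimate.
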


To look at $\dim_HX_{\Phi}$ for functions $\Phi:\N\rightarrow\R$ where the growth rate is quicker than a linear rate we restrict ourselves to a certain class of $d$-decaying systems.   
We will call an
iterated function system, $\{f_n\}_{n\in\N}$ Gauss like if
$$\cup_{i=0}^{\infty} f_i([0,1])=[0,1]$$
and if for all $x\in [0,1]$ we have that $f_i(x)<f_j(x)$ implies
$i>j$. We then have that
\begin{thm}\label{Gauss}
If $\{f_i\}_{i=1}^{\infty}$ is a Gauss like system, $\alpha>1$ and
$\Phi(n)= n^{\alpha}$ then
$$\dim_H X_{\Phi}=\frac{1}{1+\alpha(d-1)}.$$
\end{thm}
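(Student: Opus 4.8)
The plan is to prove Theorem~\ref{Gauss} by establishing matching upper and lower bounds for $\dim_H X_\Phi$ when $\Phi(n)=n^\alpha$.

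The plan is to prove Theorem~\ref{Gauss} by establishing matching upper and lower bounds for $\dim_H X_\Phi$, writing $s_H=\frac{1}{1+\alpha(d-1)}$ and $[a_1,\dots,a_n]=f_{a_1}\circ\cdots\circ f_{a_n}([0,1])$ for cylinders. The whole argument rests on two geometric facts supplied by the Gauss like hypothesis. First, since the intervals $f_j([0,1])$ are ordered by $j$ and accumulate at one endpoint, the tail union $U_k:=\bigcup_{j>k}f_j([0,1])$ is itself an interval with $\diam U_k\asymp\sum_{j>k}j^{-d}\asymp k^{-(d-1)}$. Second, by bounded distortion $\diam\big(f_{a_1}\circ\cdots\circ f_{a_n}(U)\big)\asymp\prod_{i=1}^n a_i^{-d}\cdot\diam U$; in particular $\rho_n:=\diam[a_1,\dots,a_n]\asymp\prod_{i=1}^n a_i^{-d}$. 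Because admissibility forces $\log a_{n+1}>\alpha\log a_n$, the sum $\sum_{i\le n}\log a_i$ is dominated by its last term, so $\rho_n$ is essentially governed by $a_n$; this non-stationary, super-exponential growth is the source of the difficulty.

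For the upper bound I would cover $X_\Phi$ at each level $n$ by the tail sets
$$T_{a_1,\dots,a_n}=f_{a_1}\circ\cdots\circ f_{a_n}\big(U_{\lfloor a_n^\alpha\rfloor}\big),$$
one for each admissible word. Any point of $X_\Phi$ with prefix $(a_1,\dots,a_n)$ has $a_{n+1}>a_n^\alpha$ and so lies in $T_{a_1,\dots,a_n}$, so these sets cover $X_\Phi$; by the two facts above $\diam T_{a_1,\dots,a_n}\asymp\prod_{i\le n}a_i^{-d}\cdot a_n^{-\alpha(d-1)}$, and since the last digit is forced to grow with $n$ the mesh tends to $0$. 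It remains to control $\sum_{(a_1,\dots,a_n)}\big(\prod_{i=1}^n a_i^{-d}\,a_n^{-\alpha(d-1)}\big)^s$, which I would evaluate by summing out the digits from last to first. Each summation $\sum_{a_k>a_{k-1}^\alpha}a_k^{-E_k}\asymp a_{k-1}^{-\alpha(E_k-1)}$ turns a total exponent $E_k$ on $a_k$ into the exponent $E_{k-1}=\alpha E_k+(ds-\alpha)$ on $a_{k-1}$, started from $E_n=s\big(d+\alpha(d-1)\big)$. The fixed point of this affine recursion is $E^\ast=\frac{\alpha-ds}{\alpha-1}$, and the backward iteration keeps the sum bounded as $n\to\infty$ precisely when $E_n\ge E^\ast$; solving $E_n=E^\ast$ gives $s=s_H$. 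Hence the cover sum stays bounded for every $s>s_H$, so $\mathcal H^s(X_\Phi)=0$ and $\dim_H X_\Phi\le s_H$.

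For the lower bound I would build a Cantor subset $E\subseteq X_\Phi$ by allowing only the slowest admissible digits, $a_{n+1}\in(a_n^\alpha,2a_n^\alpha]$ (about $a_n^\alpha$ choices), and place the natural measure $\mu$ on it, for which each transition has mass $\asymp a_n^{-\alpha}$. Along such a branch $\sum_{i\le n}\log a_i\asymp\frac{\alpha}{\alpha-1}\log a_n$, so $\rho_n\asymp a_n^{-d\alpha/(\alpha-1)}$ and $\mu[a_1,\dots,a_n]\asymp a_n^{-\alpha/(\alpha-1)}$. The mass distribution principle reduces everything to $\mu(B(x,r))\le C r^{s_H}$. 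At the cylinder scale $r\asymp\rho_n$ the exponent seen is $\frac{\log\mu}{\log\rho_n}=\frac1d>s_H$; the binding scale is the intermediate one $r\asymp\rho_n a_{n+1}^{-(d-1)}$, at which $B(x,r)$ captures the whole packet of sibling cylinders $[a_1,\dots,a_n,j]$ with $j\ge a_{n+1}$, which sit at positions $\asymp\rho_n j^{-(d-1)}$ inside the parent. Counting them gives $\mu(B(x,r))\asymp\mu[a_1,\dots,a_n]\asymp a_n^{-\alpha/(\alpha-1)}$, while $r\asymp a_n^{-\alpha(1+\alpha(d-1))/(\alpha-1)}$, so $\frac{\log\mu(B(x,r))}{\log r}=s_H$ exactly. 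Checking that all remaining scales give exponents no smaller than $s_H$ yields $\mu(B(x,r))\le Cr^{s_H}$ and hence $\dim_H X_\Phi\ge\dim_H E\ge s_H$.

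The hard part is this intermediate-scale estimate. Since the digits grow super-exponentially the system is genuinely non-stationary, so the usual $\tfrac1n$-normalised thermodynamic machinery does not apply: the Hausdorff dimension is governed not by the cylinder scales (which only see the exponent $1/d$) but by the enormous gaps between consecutive cylinder generations, where a single ball meets a whole packet of sibling cylinders. Making the sibling count and spacing rigorous is exactly where the Gauss like ordering and bounded distortion are needed, and it is where the exponent $s_H=\frac{1}{1+\alpha(d-1)}$ is forced.
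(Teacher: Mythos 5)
Your proposal is correct in substance and follows the same geometric skeleton as the paper, but executes both bounds differently. Your tail sets $T_{a_1,\dots,a_n}=f_{a_1}\circ\cdots\circ f_{a_n}(U_{\lfloor a_n^\alpha\rfloor})$ are exactly the paper's sets $\Delta_n(x)$, and your identification of the binding scale $r\asymp \rho_n a_{n+1}^{-(d-1)}$, where a ball swallows the whole packet of charged sibling cylinders, is precisely the phenomenon the paper isolates via its set $D_r(x)$. Where you diverge: for the upper bound you run a direct covering-sum computation, summing out the digits from last to first via the affine recursion $E_{k-1}=\alpha E_k+(ds-\alpha)$ whose fixed point forces $s=\tfrac{1}{1+\alpha(d-1)}$ (this checks out: at $s=s_H$ one has $E_n=s(d+\alpha(d-1))=E^\ast=\tfrac{\alpha-ds}{\alpha-1}>1$, and for $s>s_H$ the exponents increase geometrically going backwards, so all iterated sums converge with uniform constants), whereas the paper instead bounds the lower local dimension of its measure $\mu$ along the scales $|\Delta_n(x)|$. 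For the lower bound you put a uniform measure on the restricted digit set $a_{n+1}\in(a_n^\alpha,2a_n^\alpha]$, while the paper spreads mass over all admissible digits with the power law $\mu(a_{n+1}=j\mid a_n=i)\propto j^{-(d+\alpha(d-1))s}$, calibrated so that $\mu(\Delta_n(x))\asymp|\Delta_n(x)|^s$ at every cylinder. Your choice is arguably cleaner: at cylinder scales it gives local exponent $1/d>s_H$, and only the intermediate scales are critical, whereas the paper must control every range. The one place your write-up is genuinely thin is the final sentence ``checking that all remaining scales give exponents no smaller than $s_H$'': this is exactly where the paper spends most of its effort (the estimates on $|D_r(x)|$ and $\mu(D_r(x))$ and the concavity of $x\mapsto x^s$). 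With your uniform measure the check does go through — on $[\rho_{n+1},\rho_n a_{n+1}^{-(d-1)}]$ the function $r\mapsto\mu(B(x,r))$ is essentially linear in $r$ and lies below the concave function $r\mapsto r^{s_H}$ at both endpoints, hence throughout, and on $[\rho_n a_{n+1}^{-(d-1)},\rho_n]$ the measure is constant — but this verification, together with the bounded-distortion bookkeeping ($C^{\pm n}$ factors and $d\pm\delta$ exponents) and the reduction to a fixed first digit $a_1=K$, needs to be written out for the argument to be complete. These are matters of execution rather than missing ideas.
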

Previous work on this type of problem has been done in the case of continued fractions. Here the maps $f_n:[0,1]\rightarrow\ [0,1]$ can be defined by $f_n(x)=\frac{1}{x+n}$ for each $n\in\N$. In 1941 Good showed that the set where
$\lim_{i\rightarrow\infty}a_i=\infty$ has dimension $\frac{1}{2}$
(\cite{Good}) and this was extended by Ramharter, \cite{Ram}, to show that the set of $x$ with strictly increasing continued fraction exponents has dimension $\frac{1}{2}$. We will show that this dimension is unchanged if we
use the stronger condition $a_{i+1}>\beta a_i$ for $\beta>1$ and for all $i\in\N$. However on the other hand we will show that if we have the condition $a_{i+1}(x)>(a_{i}(x))^{\alpha}$ for all $i\in\N$ and $\alpha>1$ then the dimension does drop below $\frac{1}{2}$. 
Subsequent to the work of Good several papers, e.g \cite{L} and
\cite{WW}, have added conditions on the rate of convergence of the
$a_i$ to infinity either along sequences or subsequences. In particular \cite{WW} calculate the Hausdorff dimension of the set where $a_i(x)\geq\Phi(x)$ for infinitely many $n$ for any function $\Phi$. 

In this setting of Continued fractions Theorem \ref{thm:main} and Theorem \ref{Gauss} have the following corollary:
\begin{cor}\label{cf}
If we denote the continued fraction expansion of $x$ by $a_1(x),a_2(x),a_3(x),...$ then we have that
\begin{enumerate}
\item
for any $\beta\geq 1$ we have that:
$$\dim_H\{x:a_{i+1}(x)\geq\beta a_{i}(x)\text{ for all }i\in\N\}=\frac{1}{2};$$
\item
for any $\alpha>1$ we have that
$$\dim_H\{x:a_{i+1}(x)\geq (a_{i}(x))^{\alpha}\text{ for all }i\in\N\}=\frac{1}{1+\alpha}.$$
\end{enumerate}
\end{cor}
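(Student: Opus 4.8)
The plan is to realise the Gauss continued fraction maps $f_n(x)=1/(x+n)$ as a concrete instance of the abstract setting and then invoke Theorem \ref{thm:main} and Theorem \ref{Gauss}, taking $d=2$. First I would check that $\{f_n\}_{n\in\N}$ is a $2$-decaying system. Since $|f_n'(x)|=(x+n)^{-2}$ on $[0,1]$, one may take $\xi_n=(n+1)^{-2}$ and $\lambda_n=n^{-2}$; these verify condition (\ref{three}) with $d=2$, because for every $\varepsilon>0$ we have $\lambda_n\le n^{-(2-\varepsilon)}$ and $\xi_n=(n+1)^{-2}\ge \tfrac14 n^{-(2+\varepsilon)}$. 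Condition (\ref{two}) is immediate, as $f_n((0,1))=(\tfrac1{n+1},\tfrac1n)$ are the usual pairwise disjoint cylinders. For condition (\ref{one}) I would take $m=2$ and note that $|(f_a\circ f_b)'(x)|=(f_b(x)+a)^{-2}(x+b)^{-2}$ is maximised at $a=b=1$, $x=0$, giving the uniform bound $A=\tfrac14<1$. I would then record that the system is Gauss like: $\bigcup_{n\ge1}f_n([0,1])=\bigcup_n[\tfrac1{n+1},\tfrac1n]$ exhausts $[0,1]$ up to the single point $0$, and $f_i(x)=1/(x+i)<1/(x+j)=f_j(x)$ holds exactly when $i>j$, which is the required monotonicity.

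With these two verifications in hand, part (2) follows from Theorem \ref{Gauss} with $\Phi(n)=n^{\alpha}$, which gives $\dim_H X_{\Phi}=\tfrac{1}{1+\alpha(d-1)}=\tfrac1{1+\alpha}$. The only gap between the corollary's set and $X_{\Phi}$ is the strict versus non-strict inequality and the behaviour at digit $1$. Since $a_{i+1}\ge a_i^{\alpha}\ge a_i$ forces the sequence to be non-decreasing and to grow once it exceeds $1$, the digits equal to $1$ form a finite prefix; decomposing over the length of this prefix and over the first digit exceeding $1$ writes the set as a countable union, on each piece of which the tail satisfies $b_{j+1}\ge b_j^{\alpha}$ with $b_j\ge 2$. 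For any $\alpha'\in(1,\alpha)$ this gives $b_{j+1}>b_j^{\alpha'}$, so each piece is sandwiched between $X_{n^{\alpha}}$ and $X_{n^{\alpha'}}$; letting $\alpha'\to\alpha^-$ and using monotonicity of $\dim_H$ under countable unions yields exactly $\tfrac1{1+\alpha}$.

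For part (1) with $\beta>1$ the same sandwiching works directly: picking $\beta'$ with $1\le\beta'<\beta$, the inclusions $X_{\beta n}\subseteq\{x:a_{i+1}\ge\beta a_i\}\subseteq X_{\beta' n}$ hold (the right one because $a_i\ge 1$ makes $\beta a_i>\beta' a_i$), and Theorem \ref{thm:main} gives dimension $\tfrac1d=\tfrac12$ for both endpoints. The lower bound at $\beta=1$ is also free, since the non-decreasing set contains the strictly increasing set $X_{n}$, which has dimension $\tfrac12$ by Theorem \ref{thm:main} with $\beta=1$.

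The main obstacle is the upper bound in the boundary case $\beta=1$: because the hypothesis of Theorem \ref{thm:main} forces $\Phi(n)\ge n$, there is no admissible $\beta'<1$ to sandwich against, so the non-decreasing set $N=\{x:a_{i+1}\ge a_i\ \text{for all }i\}$ needs a direct covering estimate. Here I would cover $N$ by the generation-$n$ cylinders $I_{a_1\cdots a_n}$ with $a_1\le\cdots\le a_n$, using $\diam(I_{a_1\cdots a_n})\asymp\prod_i a_i^{-2}$ and the fact that $q_n$ grows at least like the Fibonacci numbers, so every such cylinder has diameter at most $C\rho^{n}$ with $\rho<1$. The $s$-dimensional sum $\sum_{a_1\le\cdots\le a_n}\prod_i a_i^{-2s}$ is the complete homogeneous symmetric function in the variables $a^{-2s}$, with generating function $\prod_{a\ge1}(1-t\,a^{-2s})^{-1}=(1-t)^{-1}\prod_{a\ge2}(1-t\,a^{-2s})^{-1}$. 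Splitting off the pole $(1-t)^{-1}$ coming from the digit $a=1$, the remaining factor is analytic at $t=1$ precisely when $\prod_{a\ge2}(1-a^{-2s})^{-1}$ converges, i.e.\ when $2s>1$; this forces the coefficients (hence the sums) to stay bounded as $n\to\infty$ for every $s>\tfrac12$. Combined with $\diam\to0$, this shows $\mathcal H^s(N)<\infty$ for all $s>\tfrac12$, giving $\dim_H N\le\tfrac12$ and completing the corollary.
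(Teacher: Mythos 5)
Your proposal is correct, and its core is exactly what the paper intends: the paper offers no written proof of Corollary~\ref{cf} beyond the remark that it follows from Theorems~\ref{thm:main} and~\ref{Gauss} once the maps $f_n(x)=1/(x+n)$ are recognised as a $2$-decaying, Gauss-like system, and your verification of conditions (\ref{one})--(\ref{three}) and of the Gauss-like property is the right (and essentially forced) computation. Where you genuinely diverge is in the two boundary issues the paper glosses over. First, you treat the mismatch between the strict inequality in the definition \eqref{def1} of $X_\Phi$ and the non-strict inequality in the corollary by sandwiching between $X_{n^{\alpha}}$ and $X_{n^{\alpha'}}$ (resp.\ $X_{\beta n}$ and $X_{\beta' n}$) after peeling off the finite prefix of digits equal to $1$; this is a clean fix and the monotonicity in $\alpha'$, $\beta'$ closes the gap in the limit. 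Second, and more substantially, for $\beta=1$ the paper simply cites Ramharter \cite{Ram} for the upper bound on the non-decreasing set, whereas you give a self-contained covering argument: the $s$-dimensional sum over non-decreasing digit blocks is the complete homogeneous symmetric function $h_n(1^{-2s},2^{-2s},\dots)$, whose generating function $(1-t)^{-1}\prod_{a\geq 2}(1-ta^{-2s})^{-1}$ has bounded partial coefficient sums precisely when $s>\tfrac12$, yielding $\mathcal{H}^s(N)<\infty$. This buys you independence from \cite{Ram} at the cost of an argument specific to continued fractions (it uses $q_n\geq\prod a_i$ and Fibonacci growth), while the paper's route keeps the corollary a pure consequence of the general theorems plus one citation. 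Both are valid; yours is the more complete write-up.
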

It should be noted that in part 1. of the Corollary the case where $\beta=1$ was shown by Ramharter in \cite{Ram}. The second part of this Corollary  relates to the work by \L uczak in \cite{L}. Here for $\alpha,\beta>1$ the sets
$$\Theta[\alpha,\beta]=\{x:a_n(x)\geq \beta^{\alpha^n}\text{ for all }n\in\N\}$$
are considered (where $a_i(x)$ denote the continued fraction digits of $x$). It is shown that $\dim\Theta[\alpha,\beta]=\frac{1}{1+\alpha}$ which corresponds with the dimension found in Part 2. of Corollary \ref{cf}. This connection is no surprise since if we have that $a_{i+1}> a_i^{\alpha}$ for all $i\in \N$ then $a_n(x)> a_1(x)^{\alpha^n}$.

Finally we can show that Theorem \ref{Gauss} does not hold if we consider more general systems. In particular if there are gaps between the first level cylinders
then the situation can be significantly different as illustrated by the
following theorem:
\begin{thm}\label{existence}
For any $d>1$ and any strictly increasing function
$\Phi:\N\rightarrow\N$ there exists a $d$-decaying system,
$\{f_i\}_{i=1}^{\infty}$ such that
$$\dim_H X_{\Phi}=\frac{1}{d}.$$
\end{thm}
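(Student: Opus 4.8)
The plan is to separate the (universal) upper bound from a construction realising the lower bound. For the upper bound, note that a strictly increasing $\Phi:\N\to\N$ automatically satisfies $\Phi(n)\ge n$, so every $\underline a$ with $a_{n+1}>\Phi(a_n)$ has strictly increasing digits; hence $X_\Phi\subseteq X_{\mathrm{id}}$, where $X_{\mathrm{id}}$ is the set of points with strictly increasing digits. Applying Theorem \ref{thm:main} with $\beta=1$ (i.e. $\Phi=\mathrm{id}$) to \emph{any} $d$-decaying system gives $\dimh X_{\mathrm{id}}=\tfrac1d$, so by monotonicity of Hausdorff dimension $\dimh X_\Phi\le\tfrac1d$ holds for every $d$-decaying system. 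It therefore suffices to construct one $d$-decaying system for which $\dimh X_\Phi\ge\tfrac1d$.

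For the construction I would take all maps affine: fix $c\in(0,1)$ with $c\,\zeta(d)<1$ and set $f_i(x)=c\,i^{-d}x+t_i$, so that $\xi_i=\lambda_i=c\,i^{-d}$. Then condition (3) holds with $C_1=C_2=c$ (as $i^{-\epsilon}\le1\le i^{\epsilon}$), condition (1) holds with $m=1$, $A=c$, and the images are intervals of total length $c\,\zeta(d)<1$, which may be placed disjointly — this is exactly the freedom granted by allowing gaps between the first level cylinders. Choose integers $M_1<M_2<\cdots$ growing so fast that $M_{k+1}>\Phi(2M_k)$ for all $k$, put $R_k=\{M_k+1,\dots,2M_k\}$, and choose the translations $t_i$ so that for each $k$ the $M_k$ cylinders $\{f_i([0,1])\}_{i\in R_k}$ are spread through $[0,1]$ with mutual gaps of order $1/M_k$ (possible since each has length $\asymp M_k^{-d}\ll M_k^{-1}$); the unused maps are placed arbitrarily in the remaining gaps.

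For the lower bound, set $E=\Pi\{\underline a:\ a_k\in R_k\text{ for all }k\}$. For such $\underline a$ we have $a_{k+1}>M_{k+1}>\Phi(2M_k)\ge\Phi(a_k)$, so $E\subseteq X_\Phi$. Equip $E$ with the measure $\mu$ choosing each $a_k\in R_k$ uniformly and independently. A level-$n$ cylinder $C_n=[a_1,\dots,a_n]$ then satisfies $\log(1/\mu(C_n))=\sum_{k\le n}\log M_k$, and, since $a_k\asymp M_k$ and the maps are affine with ratios $c\,a_k^{-d}$, $\log(1/\diam C_n)=d\sum_{k\le n}\log M_k+O(n)$; as $M_k\to\infty$ the $O(n)$ term is negligible and $\log\mu(C_n)/\log\diam C_n\to\tfrac1d$. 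To pass from cylinders to balls I would, for $\diam C_{n+1}\le r<\diam C_n$, use the imposed spacing to bound the number of level-$(n+1)$ subcylinders of $C_n$ meeting $B(x,r)$ by $\lesssim rM_{n+1}/\diam C_n+1$, each of mass $\mu(C_n)/M_{n+1}$, giving $\mu(B(x,r))\lesssim r^{1/d-\delta}$ for every $\delta>0$ and all large $n$; the mass distribution principle then yields $\dimh E\ge\tfrac1d$, and combined with the upper bound $\dimh X_\Phi=\tfrac1d$.

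I expect the main obstacle to be precisely this last geometric step: converting the favourable entropy-to-Lyapunov ratio into a ball estimate requires that a ball of intermediate radius cannot meet too many sibling cylinders, which is guaranteed only by the uniform $1/M_k$-spacing built into the placement of the $t_i$. It is exactly this control over positions — unavailable for Gauss like systems, where the high-index cylinders are forced to cluster near $0$ and thereby depress the dimension as in Theorem \ref{Gauss} — that allows the super fast growth of the $M_k$ to be absorbed: all multiplicative errors contribute only $O(n)$ to logarithms, which is negligible against $\log(1/\diam C_n)\gg n$, and so $\dimh X_\Phi$ stays at the maximal value $\tfrac1d$.
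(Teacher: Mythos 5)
Your proposal is correct and follows the same overall strategy as the paper: build an explicit piecewise-affine $d$-decaying system in which the first-level cylinders that will actually be used are separated by engineered gaps inside $[0,1]$, push a product measure onto the resulting subset of $X_\Phi$, and apply the mass distribution principle, with the upper bound coming for free from $X_\Phi\subseteq X_{\mathrm{id}}$ and Theorem \ref{thm:main}. The implementations differ in two ways. The paper recycles the machinery of Section 2: it takes index blocks $(\Phi(l_n),l_{n+1}]$ defined by the pressure-type equation \eqref{eqn:ln} (so that the Lemma \ref{measure} measure automatically satisfies $\nu(\mathcal{C}_n(x))\leq|\mathcal{C}_n(x)|^{1/d-\varepsilon}$), and inserts gaps of size $Cn^{-2}l_{n+1}^{-1}$ between consecutive relevant cylinders, the factor $n^{-2}$ keeping the total added length summable; the intermediate-radius estimate is then a concavity argument for the piecewise-linear majorant $g_n(r)$. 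You instead take blocks $R_k=\{M_k+1,\dots,2M_k\}$ with $M_{k+1}>\Phi(2M_k)$, the uniform measure on indices, and separation $\asymp 1/M_k$; this avoids the pressure equation entirely and gets the exponent $1/d$ from $\log(1/\diam\mathcal{C}_n)=d\sum_{k\leq n}\log M_k+O(n)$ with the $O(n)$ absorbed by the super-fast growth of $M_k$ --- arguably cleaner, at the cost of having to grow $M_k$ faster than $\Phi$ demands. Two points you should still write out: (i) the simultaneous placement, for every $k$, of the $R_k$-cylinders at $\asymp 1/M_k$-separated positions while keeping all first-level cylinders disjoint (easy since the total length is $c\zeta(d)<1$ and each block has total length $\asymp M_k^{1-d}$, but it needs a sentence, e.g.\ a greedy placement block by block); and (ii) that for $r<\diam\mathcal{C}_n(x)$ the ball $B(x,r)$ cannot reach a sibling level-$n$ cylinder of $E$, which holds because those siblings are separated by $\gtrsim\diam\mathcal{C}_{n-1}(x)/M_n\gg\diam\mathcal{C}_n(x)$. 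Neither is a gap in the idea; with them filled in your argument is complete.
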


Throughout the paper if $x\in\Lambda$ we will denote the $n$th
level cylinder containing $x$ by
$$C_n(x):=f_{a_1(x)}\circ\cdots\circ f_{a_n(x)}([0,1]).$$
 The rest of the paper is laid
out as follows. In section 2 we prove some lemmas which are the
key to the proofs of our main theorems. Theorem \ref{thm:main} and
Theorem \ref{packing} are then proved in section 3. Finally section
4 and 5 are devoted to the proofs of Theorems \ref{Gauss} and
\ref{existence} respectively.

We would like to thank Omri Sarig and Marc Kesseb\"{o}hmer for subsequent useful discussions. In the particular we would like to thank Marc Kesseb\"{o}hmer for informing us of the work of Remharter of which we were previously unaware.

\section{Key Lemmas}

In this section we prove a series of lemmas with will be used in
the following sections. We start with two lemmas needed to prove
the upper bound for Theorems \ref{thm:main} and \ref{packing}. We
fix $d>1$ and a $d$-decaying iterated function system
$\{f_i\}_{i=1}^{\infty}$. For a positive integer $k$ we will use
$\Lambda_k$ to denote the attractor of the system
$\{f_i\}_{i=k}^{\infty}$.

\begin{lem}\label{subsyst}
$$\lim_{k\rightarrow\infty}\dim_H\Lambda_k=\frac{1}{d}$$
\end{lem}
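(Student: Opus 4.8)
The plan is to prove the two inequalities $\limsup_{k\to\infty}\dim_H\Lambda_k\leq\frac{1}{d}$ and $\dim_H\Lambda_k\geq\frac{1}{d}$ for every $k$; together these force the limit to exist and equal $\frac{1}{d}$. (As an aside, since the generators of $\Lambda_{k+1}$ form a subcollection of those of $\Lambda_k$ we have $\Lambda_{k+1}\subseteq\Lambda_k$, so $\dim_H\Lambda_k$ is already non-increasing.) The workhorse throughout is a distortion-free sandwich for cylinder diameters. Writing $g=f_{a_1}\circ\cdots\circ f_{a_n}$, each $f_i$ is monotone (its derivative is continuous and bounded in modulus below by $\xi_i>0$, hence of constant sign), so by the chain rule and the mean value theorem
$$\prod_{j=1}^n\xi_{a_j}\leq\diam C_n\leq\prod_{j=1}^n\lambda_{a_j}.$$
Condition (\ref{one}) moreover gives $\diam C_n\leq A^{\lfloor n/m\rfloor}$, so level-$n$ cylinders shrink uniformly and may legitimately be used as covers when estimating Hausdorff measure.

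For the upper bound I would fix $s>\frac{1}{d}$ and choose $\epsilon>0$ with $(d-\epsilon)s>1$. Using $\lambda_i\leq C_2 i^{-(d-\epsilon)}$ from condition (\ref{three}) together with the right-hand sandwich inequality, the level-$n$ cylinders of $\Lambda_k$ satisfy
$$\sum_{a_1,\ldots,a_n\geq k}(\diam C_n)^s\leq\Bigl(\sum_{i\geq k}\lambda_i^s\Bigr)^n\leq\Bigl(C_2^s\sum_{i\geq k}i^{-(d-\epsilon)s}\Bigr)^n.$$
The tail $\sum_{i\geq k}i^{-(d-\epsilon)s}$ is finite and tends to $0$ as $k\to\infty$, so I can fix $k_0$ for which the bracketed quantity is strictly less than $1$. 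For $k\geq k_0$, letting $n\to\infty$ shows the $s$-dimensional Hausdorff measure of $\Lambda_k$ vanishes, whence $\dim_H\Lambda_k\leq s$. As $s>\frac{1}{d}$ was arbitrary, $\limsup_k\dim_H\Lambda_k\leq\frac{1}{d}$.

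For the lower bound I would fix $s<\frac{1}{d}$ and $\epsilon>0$ with $(d+\epsilon)s<1$. Then $\xi_i\geq C_1 i^{-(d+\epsilon)}$ gives $\sum_{i\geq k}\xi_i^s=\infty$, so there is a finite set $F=\{k,\ldots,N\}$ with $S:=\sum_{i\in F}\xi_i^s\geq 1$. On the finite subsystem $\{f_i\}_{i\in F}$ I place the Bernoulli measure $\mu$ with weights $p_i=\xi_i^s/S\leq\xi_i^s$; the left-hand sandwich inequality then yields, for every cylinder generated by $F$,
$$\mu(C_n)=\prod_{j=1}^n p_{a_j}\leq\prod_{j=1}^n\xi_{a_j}^s\leq(\diam C_n)^s.$$
Because $F$ is finite, the images $f_i([0,1])$ are finitely many intervals separated by gaps bounded below (condition (\ref{two})), so a ball of radius $r$ meets only a bounded number of cylinders of comparable diameter; the standard mass distribution argument then gives $\dim_H\Lambda_k\geq\dim_H(\text{attractor of }\{f_i\}_{i\in F})\geq s$. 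Letting $s\uparrow\frac{1}{d}$ finishes the lower bound.

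The step demanding the most care is the distortion. The ratio $\lambda_i/\xi_i$ is only controlled by $(C_2/C_1)\,i^{2\epsilon}$, so the full infinite system need not have uniformly bounded distortion; this is precisely why the lower bound is executed on a \emph{finite} subsystem, where the distortion constant is automatically finite, and why every estimate is arranged to use only the one-sided derivative bounds $\lambda_i$ or $\xi_i$. Since $\epsilon$ enters solely through the exponents $(d\mp\epsilon)s$ and may be taken arbitrarily small, the polynomial corrections $i^{\pm\epsilon}$ never shift the critical exponent away from $\frac{1}{d}$.
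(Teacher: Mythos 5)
Your proposal is correct and follows essentially the same route as the paper: an upper bound via covering $\Lambda_k$ by level-$n$ cylinders with $\sum_{i\geq k}\lambda_i^s<1$, and a lower bound via a finite subsystem $\{f_i\}_{i\in F}$ with $\sum_{i\in F}\xi_i^s\geq 1$ (the paper simply cites the standard Moran-type result where you carry out the mass distribution argument explicitly). One small inaccuracy: condition (\ref{two}) only gives disjoint open images, not gaps bounded below, but the bounded-overlap count you need still follows from disjoint interiors together with the comparability of cylinder lengths in the finite subsystem.
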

\begin{proof}
In the case of the Gauss map this result is contained in the work of Good, \cite{Good} and the precise asymptotic for the rate of convergence is given in \cite{JK}. For more general systems it will follow from Bowen's formula for the Hausdorff dimension of infinite iterated function systems given in \cite{MU}. However some of the systems we are considering do not
satisfy the assumptions in \cite{MU} and so we include a proof.
First of all we prove that $\dim_H\Lambda_k\geq\frac{1}{d}$. We
fix any $s<\frac{1}{d}$. We can then find $m\in\N$ such that
$\sum_{i=k+1}^m\xi_i^s\geq 1$. If we consider the iterated
function system consisting of the maps $f_k,\ldots,f_m$ and let
$\Lambda_k,m\subset\Lambda$ be the attractor. By standard results
for iterated function systems $\dim_H\Lambda_{m,k}\geq s_m$ where
$s_m$ is the solution to $\sum_{i=k+1}^m\xi_i^{s_m}=1$ and we know
by definition that $s_m\geq s$. Since this holds for any
$s<\frac{1}{d}$ we know that $\dim_H\Lambda\geq\frac{1}{d}$.

To obtain the upper bound  we fix $s>\frac{1}{d}$ and choose $k$
such that $\sum_{i=k}^{\infty}\lambda_i^s\leq 1$. For convenience
we will denote $\N(k)$ to be the set of natural numbers greater
than or equal to $k$. We get that
\begin{eqnarray*}
\sum_{(a_1,\ldots,a_n)\in \N(k)^n}|a_1,\ldots,a_n|^{s}&\leq&\sum_{(a_1,\ldots,a_n)\in \N(k)^n}(\lambda_{a_1}\cdots\lambda_{a_n})^s\\
&\leq&\left(\sum_{i=k}^{\infty}\lambda_i^s\right)^n\leq 1.
\end{eqnarray*}
It then follows that $\dim_H\Lambda_k\leq s$. Thus for every
$s>\frac{1}{d}$ we can find $k$ such that $\dim_H\Lambda_k\leq s$
since $\dim_H\Lambda_k$ is clearly monotonically decreasing the
result follows.
\end{proof}
We also need analogue of Lemma \ref{subsyst} in terms of upper box
dimension or equivalently packing dimension. We will let
$$s_0=\overline{\dim}_B(\{f_i(0)\}_{i=1}^{\infty}.$$
Note that instead of $0$ we could take any other point of interval
$[0,1]$ and the value of $s_0$ would not change.

\begin{lem}\label{subsystpack}
$$\lim_{k\to\infty}\dimp\Lambda_k\leq\max\left\{\frac{1}{d},s_0\right\}$$
\end{lem}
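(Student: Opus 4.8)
The plan is to bound the upper box dimension of $\Lambda_k$ and then invoke the standard inequality $\dimp E\le\overline{\dim}_B E$ (see \cite{Falcbook}), so that it suffices to show $\overline{\dim}_B\Lambda_k\le s$ for all large $k$ whenever $s>\max\{1/d,s_0\}$; letting $s$ decrease to $\max\{1/d,s_0\}$ and using that $\dimp\Lambda_k$ is decreasing in $k$ (since $\Lambda_{k+1}\subseteq\Lambda_k$) then gives the lemma. Fix such an $s$ and choose $\varepsilon>0$ so small that $s>\max\{1/(d-\varepsilon),\,s_0+\varepsilon\}$, which is possible because the right-hand side tends to $\max\{1/d,s_0\}$ as $\varepsilon\to0$. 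By the $d$-decaying property $\lambda_i\le C_2 i^{-(d-\varepsilon)}$, so $\lambda_i^s\le C_2^s i^{-(d-\varepsilon)s}$ with $(d-\varepsilon)s>1$, whence $\sum_{i\ge1}\lambda_i^s$ converges and we may fix $k$ so large that $\sum_{i\ge k}\lambda_i^s\le\tfrac12$. Writing $N_\delta(\cdot)$ for the number of intervals of length $\delta$ needed to cover a set, I will establish a bound $N_\delta(\Lambda_k)\le K\delta^{-s}$ with $K=K(k,\varepsilon)$ independent of $\delta$, which gives $\overline{\dim}_B\Lambda_k\le s$.

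The estimate rests on the decomposition $\Lambda_k=\bigcup_{i\ge k}f_i(\Lambda_k)$, split according to the size of the first digit. Given $\delta$, let $M=M(\delta)$ be the largest $i$ with $\lambda_i\ge\delta$. For the finitely many \emph{large} cylinders $k\le i\le M$ I use $|f_i'|\le\lambda_i$: any cover of $\Lambda_k$ by intervals of length $\delta/\lambda_i$ maps under $f_i$ to a cover of $f_i(\Lambda_k)$ by intervals of length at most $\delta$, so $N_\delta(f_i(\Lambda_k))\le N_{\delta/\lambda_i}(\Lambda_k)$, and $\delta/\lambda_i\le1$ for these $i$. For the infinitely many \emph{small} cylinders $i>M$ one has $\diam f_i(\Lambda_k)\le\lambda_i<\delta$, and since $f_i(0)$ is an endpoint of the interval $f_i([0,1])$, the set $f_i(\Lambda_k)$ lies within distance $\delta$ of $f_i(0)$. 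Hence $\bigcup_{i>M}f_i(\Lambda_k)$ is contained in the $\delta$-neighbourhood of $\{f_i(0):i\ge k\}$, which can be covered by at most $3N_\delta(\{f_i(0):i\ge1\})\le3\delta^{-s_0-\varepsilon}$ intervals of length $\delta$ for $\delta$ below some $\delta_0$, by the definition of $s_0$. Combining the two parts yields
\begin{equation*}
N_\delta(\Lambda_k)\le\sum_{i=k}^{M(\delta)}N_{\delta/\lambda_i}(\Lambda_k)+3\delta^{-s_0-\varepsilon}.
\end{equation*}

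It remains to solve this inequality. Setting $g(\delta)=\delta^{s}N_\delta(\Lambda_k)$ and using $N_{\delta/\lambda_i}(\Lambda_k)=(\delta/\lambda_i)^{-s}g(\delta/\lambda_i)$ together with $\sum_{i\ge k}\lambda_i^s\le\tfrac12$ and $s\ge s_0+\varepsilon$, the inequality becomes $g(\delta)\le\tfrac12\sup_{\delta'>\delta}g(\delta')+3$ for $\delta<\delta_0$. Since every scale $\delta/\lambda_i$ on the right is strictly coarser than $\delta$ by the fixed factor $\lambda_i^{-1}\ge\lambda_k^{-1}>1$, and $g$ is trivially bounded on $[\delta_0,1]$ (and these finitely many coarse scales are absorbed into $K$), a straightforward induction on scales bounds $g$ uniformly, giving $N_\delta(\Lambda_k)\le K\delta^{-s}$ as required. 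I expect the main obstacle to be precisely the treatment of the small cylinders: because there are infinitely many of them they cannot be covered individually, and the whole point is that their union must be covered collectively through the accumulation of the endpoints $f_i(0)$, which is exactly where the term $s_0$ enters. The large cylinders, by contrast, give the familiar self-similar estimate producing the $1/d$ term via the convergence of $\sum_i\lambda_i^s$ for $s>1/d$.
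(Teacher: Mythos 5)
Your proof is correct, and it rests on the same two ingredients as the paper's argument --- convergence of $\sum_i \lambda_i^s$ for $s>1/d$ controls the first-level pieces that are large compared with the covering scale, while the pieces that are small compared with the covering scale cannot be covered one at a time and must be covered collectively through the box-counting of their basepoints $\{f_i(0)\}$, which is exactly where $s_0$ enters --- but the organisation is genuinely different. The paper works at a single fine scale $\lambda^N$: it classifies \emph{all} cylinders, of every level, into generations $A_n$ according to their diameter, bounds $\#A_n\le\lambda^{-(n+1)(1/d+\epsilon)}$, covers for each $\omega\in A_n$ the union $D(\omega)$ of its too-small children by a rescaled cover of $\{f_j(0)\}$, and sums the resulting series; it then invokes Theorem 3.1 of \cite{MU} to pass to packing dimension. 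You instead derive a renewal-type functional inequality for the covering function $N_\delta(\Lambda_k)$ using only the first-level decomposition $\Lambda_k=\bigcup_{i\ge k}f_i(\Lambda_k)$ and subadditivity of covering numbers under the $f_i$, and solve it by induction on generations of scales; the multi-level bookkeeping the paper does by hand is absorbed into the recursion, and the passage to packing dimension uses only the elementary inequality $\dimp\le\overline{\dim}_B$. Two small points to tidy: the $\lambda_i$ need not be monotone, so the coarsening factor should be stated as $\lambda_i^{-1}\ge\theta^{-1}>1$ with $\theta=\sup_{i\ge k}\lambda_i\le C_2 k^{-(d-\varepsilon)}<1$ for $k$ large, rather than $\lambda_i^{-1}\ge\lambda_k^{-1}$; and the displayed recursion should read $g(\delta)\le\tfrac12\sup_{\delta'\ge\delta/\theta}g(\delta')+3$, since a supremum over all $\delta'>\delta$ is too weak by itself to close the induction --- though your remark about the fixed coarsening factor shows you have the correct version in mind.
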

\begin{proof}
Let $\epsilon>0$, $\delta << \epsilon$ and let $K$ be sufficiently
large such that
$\sum_{i=K}^{\infty}(C_2(\delta))^{d^{-1}+\epsilon}i^{-(d-\delta)(d^{-1}+\epsilon)}
\leq 1$. We will fix $K^{-d}<\lambda<1$. We can find a constant
$N_0>0$ such that for any integer $n\geq 0$ we can cover
$\{f_j(v)\}_{j=1}^{\infty}$ by $N_0\lambda^{-n(s_0+\epsilon)}$
intervals of size $\frac {C_1(\delta)}
{C_2(\delta)}\lambda^{(1+2\delta/d)n}$.

We let $\N(K)^*$ denote all finite words formed from the alphabet
$\N(K)$. Let

$$A_n=\{\omega\in\N(K)^*:\lambda^{n}>|f_{\omega}([0,1])|\geq\lambda^{n+1}\}.$$
We have that $\#A_n\leq \lambda^{-(n+1)(d^{-1}+\epsilon)}$. We now
fix integer $N>0$ and find a cover of $\Lambda_K$ with intervals
of length $\lambda^N$. Let $0<n\leq N$ and let $\omega\in A_n$. We
denote

$$D(\omega)=\bigcup\{f_{\omega}\circ f_j([0,1]); |f_{\omega}\circ f_j([0,1])|\leq\Lambda^N\}$$
where $j\in\N$. We have that

$$\Lambda_k\subset \cup_{n=0}^{N}\cup_{\omega\in A_n}D(\omega).$$
For $\omega\in A_n$ we know that $\{f_j(0)\}_{j=1}^{\infty}$ can
be covered by at most $N_0\lambda^{-N(s_0+\epsilon)}$ intervals of
size $\frac {C_1(\delta)} {C_2(\delta)}\lambda^{(1+2\delta/d)n-N}$
and so $D(\omega)$ can be covered by
$N_1\lambda^{(n-N)(s_0+\epsilon)}$ intervals of size
$\lambda^{-N}$ for a constant $0<N_1\leq 3N_0$. Therefore we have
that $\Lambda_K$ can be covered by

$$N_1\sum_{n=0}^N \lambda^{-(n+1)(d^{-1}+\epsilon)-(N-n)(s_0+\epsilon)}$$
intervals of size $\lambda^N$. Thus

$$\overline{\dim}_B\Lambda_k\leq \limsup_{n\rightarrow\infty}\frac{\log\sum_{n=0}^N \lambda^{-n(d^{-1}+\epsilon)-(N-n)(s_0+\epsilon)}}{-N\log\lambda}\leq\max\{d^{-1},s_0\} + \epsilon.$$
Applying Theorem 3.1 in \cite{MU} completes the proof.
\end{proof}
We now let $\Phi:\N\rightarrow\R$ satisfy $\Phi(n)\geq n$ for all $n\in\N$ and let the set $X_{\Phi}$ be as defined in (\ref{def1}).
To prove the lower bounds in Theorems \ref{thm:main} and
\ref{packing} we introduce certain subsets of $X_{\Phi}$ which we
will use in order to define a measure supported on $X_{\Phi}$. For
any natural $n$ let $l(n)$ be the minimal natural number such that

\begin{equation} \label{eqn:ln}
\sum_{i=[\Phi(n)]+1}^{l(n)} \xi_i ^{1/d-\epsilon} \geq 1
\end{equation}
where $[\Phi(n)]$ denotes the integer part of $\Phi(n)$.
We will then let $K$ be the smallest integer such that for any $k\geq K$ we have

$$k^{-d-\epsilon}\leq\xi_k\leq\lambda_k\leq k^{-d+\epsilon}.$$
We then define $\{l_n\}_{n\in\N}$ recursively by $l_1=K$ and
$l_{n+1}=l(l_n)$. Let $Y_{\Phi, \epsilon}$ be a subset of
$X_\Phi$ defined as
\[
Y_{\Phi, \epsilon} = \{x\in [0,1]; \Phi(l_n) <a_n(x)\leq
l_{n+1}\}.
\]

\begin{lem} \label{lem:ln}
There exist $\gamma > 1$ such that
\[
\frac {l_{n+1}} {\Phi(l_n)} < \gamma
\]
for all $n$.
\end{lem}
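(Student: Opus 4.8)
The plan is to prove the slightly stronger statement that $l(n)\le\gamma_0\,\Phi(n)$ for all sufficiently large $n$ and any fixed $\gamma_0>1$, and then patch in the finitely many small indices to obtain a single constant $\gamma$. The mechanism is that the exponent $1/d-\epsilon$ appearing in \eqref{eqn:ln} is chosen precisely so that the tail sum diverges at a polynomial rate, which forces $l(n)$ to lie within a bounded multiplicative factor of $[\Phi(n)]$.

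First I would record the elementary estimate on the summands. Write $s=1/d-\epsilon$ (taking $\epsilon$ small enough that $s>0$). Since $\xi_i\ge i^{-d-\epsilon}$ for every $i\ge K$ by the choice of $K$, raising to the positive power $s$ gives $\xi_i^{s}\ge i^{-p}$ with $p=(d+\epsilon)(1/d-\epsilon)$. A direct computation yields
$$1-p=\epsilon\Bigl(d-\tfrac1d\Bigr)+\epsilon^{2},$$
which is strictly positive because $d>1$; this is the one point where the hypotheses of a $d$-decaying system genuinely enter. In particular $\sum_i\xi_i^{s}=\infty$, so $l(n)$ is finite for every $n$ and the ratios in question are well defined.

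Next, fix any $\gamma_0>1$ and set $a=[\Phi(n)]$, noting $a\ge K$ since $\Phi(n)\ge n$ and (as observed below) $l_n\ge K$. Bounding the block sum over $a<i\le\lfloor\gamma_0 a\rfloor$ from below by the number of terms times the smallest term gives
$$\sum_{i=a+1}^{\lfloor \gamma_0 a\rfloor}\xi_i^{s}\;\ge\;\sum_{i=a+1}^{\lfloor \gamma_0 a\rfloor}i^{-p}\;\ge\;\bigl(\lfloor\gamma_0 a\rfloor-a\bigr)\,(\gamma_0 a)^{-p},$$
and the right-hand side is of order $a^{1-p}$, hence exceeds $1$ once $a$ is larger than some threshold $a_0=a_0(\gamma_0)$. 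For such $a$ the minimality in the definition of $l(n)$ forces $l(n)\le\lceil\gamma_0 a\rceil\le\gamma_0\Phi(n)+1$, that is, $l(n)/\Phi(n)\le\gamma_0+1/\Phi(n)$.

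Finally I would assemble the uniform bound. Since $\Phi(n)\ge n$ and $l_{n+1}=l(l_n)\ge[\Phi(l_n)]+1>l_n$, the sequence $(l_n)$ is strictly increasing and tends to infinity, so $[\Phi(l_n)]\ge a_0$ for all $n$ beyond some $n_0$; for these $n$ the previous step yields $l_{n+1}/\Phi(l_n)\le\gamma_0+1$. The remaining finitely many ratios are each finite, so taking $\gamma$ larger than their maximum and larger than $\gamma_0+1$ gives the claim, with $\gamma>1$ because $l_{n+1}=l(l_n)>[\Phi(l_n)]$. The computations are all routine; the only genuine point to get right is the uniformity in $n$, which is handled by separating the large-$n$ regime controlled by the polynomial divergence $a^{1-p}\to\infty$ from the finitely many small-$n$ terms.
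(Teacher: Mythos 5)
Your proof is correct and rests on exactly the same mechanism as the paper's: the exponent $p=(d\pm\epsilon)(1/d-\epsilon)$ is strictly less than $1$, so the tail sum $\sum\xi_i^{1/d-\epsilon}$ grows polynomially and the minimality in the definition of $l(n)$ pins $l(n)$ within a bounded multiplicative factor of $\Phi(n)$. The only cosmetic difference is that the paper sandwiches the full sum between $2$ and an integral and solves for $l(n)$, while you show directly that a window of multiplicative width $\gamma_0$ already pushes the sum past $1$ for large $\Phi(n)$ (and you correctly use the lower bound $\xi_i\geq i^{-d-\epsilon}$, where the paper's displayed exponent appears to carry a sign typo); these are the same estimate.
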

\begin{proof}
By assumption we have that $C_1 i^{-d-\varepsilon}\leq\xi_i\leq
C_2 i^{-d+\varepsilon}$. Thus we have that if $n$ is sufficiently
large

$$2\geq\sum_{i=\Phi(n)+1}^{l(n)}\xi_i^{1/d-\epsilon}\geq C_2\int_{\Phi(n)+1}^{l(n)} s^{(-d+\epsilon)(1/d-\epsilon)}\text{d}s.$$
Evaluating this integral and using the fact that $2(d+\frac 1 d-\epsilon)\epsilon<1$ gives that
$$l(n)^{(d+\frac 1 d-\epsilon)\epsilon}\leq 2\Phi(n)^{(d+\frac 1 d-\epsilon)\epsilon}$$
and the result easily follows.
\end{proof}

The following lemma is the key to the lower bound for Theorems
\ref{thm:main}, \ref{packing} and \ref{existence}.

\begin{lem}\label{measure}
We can define a probability measure $\nu$ supported on $Y_{\Phi,\epsilon}$ such that
\begin{enumerate}
\item\label{p1}
$\nu(\mathcal{C}_n(x))<|\mathcal{C}_n(x)|^{1/d-\epsilon}$ for all
$x\in Y_{\Phi,\epsilon}$. \item\label{p2} For $\nu$ almost all
$x\in Y_{\Phi,\epsilon}$
$$\limsup_{r\rightarrow 0}\frac{\log(\nu(B(x,r)))}{\log r}\geq \frac{1}{d}-\epsilon.$$
\end{enumerate}
\end{lem}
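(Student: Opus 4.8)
The plan is to realise $\nu$ as an inhomogeneous Bernoulli measure on the coding space and push it forward by $\Pi$. Since the admissible range of the $n$th digit in $Y_{\Phi,\epsilon}$, namely $([\Phi(l_n)]+1,l_{n+1}]$, depends only on $n$ and not on the previous digits, I would choose the digits independently, assigning to the value $i$ at level $n$ the weight
\[
p_n(i)=\frac{\xi_i^{1/d-\epsilon}}{S_n},\qquad S_n=\sum_{i=[\Phi(l_n)]+1}^{l_{n+1}}\xi_i^{1/d-\epsilon}.
\]
By the very definition of $l_{n+1}=l(l_n)$ in (\ref{eqn:ln}) the denominator satisfies $S_n\geq 1$, so each $p_n$ is a genuine probability vector and the resulting product measure $\nu$ is a probability measure carried by $Y_{\Phi,\epsilon}$.

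For part (\ref{p1}) I would compute, for $x$ with digits $a_1,\dots,a_n$,
\[
\nu(\mathcal{C}_n(x))=\prod_{j=1}^n\frac{\xi_{a_j}^{1/d-\epsilon}}{S_j}\leq\prod_{j=1}^n\xi_{a_j}^{1/d-\epsilon},
\]
using $S_j\geq 1$, while the chain rule together with the bound $|f_i'|\geq\xi_i$ gives the distortion estimate $|\mathcal{C}_n(x)|\geq\prod_{j=1}^n\xi_{a_j}$. Combining the two yields $\nu(\mathcal{C}_n(x))\leq|\mathcal{C}_n(x)|^{1/d-\epsilon}$, and strictness follows since $S_j>1$ for all but the exceptional exact-hit case (which one may also absorb into the diameter bound, as $|\mathcal{C}_n(x)|$ strictly exceeds $\prod_j\xi_{a_j}$ whenever the maps are not affine with extremal derivative).

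The substance of the lemma is part (\ref{p2}), and here I would work with the distinguished sequence of scales $r_n=|\mathcal{C}_n(x)|$. The point is that, although at intermediate scales a ball may swallow a great deal of sibling mass (this is exactly why only the $\limsup$, and not the $\liminf$, reaches $1/d-\epsilon$), at the scale $r_n$ the ball $B(x,r_n)$ is geometrically efficient. I would argue as follows: the children of $\mathcal{C}_{n-1}(x)$ are linearly ordered, and a child with digit $j$ has length comparable to $|\mathcal{C}_{n-1}(x)|\,j^{-d}$. Using this polynomial decay one checks that the siblings lying within distance $r_n\asymp|\mathcal{C}_{n-1}(x)|\,a_n^{-d}$ of $x$ are only $O(1)$ in number and each of length comparable to $|\mathcal{C}_n(x)|$; in particular the siblings with much larger digit, which accumulate at one end of $\mathcal{C}_{n-1}(x)$, sit at distance $\asymp|\mathcal{C}_{n-1}(x)|\,a_n^{-(d-1)}\gg r_n$ and are therefore missed. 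Hence $\nu(B(x,r_n))\leq C\,\nu(\mathcal{C}_n(x))$ for a universal constant $C$, and feeding in part (\ref{p1}) gives
\[
\frac{\log\nu(B(x,r_n))}{\log r_n}\geq\frac{\log C+(1/d-\epsilon)\log|\mathcal{C}_n(x)|}{\log|\mathcal{C}_n(x)|}\longrightarrow\frac1d-\epsilon,
\]
so that $\limsup_{r\to0}\log\nu(B(x,r))/\log r\geq1/d-\epsilon$.

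The main obstacle is the sibling count at scale $r_n$ in a general $d$-decaying system, where gaps between first level cylinders (and hence between all cylinders) may be large. Larger gaps only decrease the number of siblings met, so they are harmless; the genuine difficulty is when $x$ lies within $r_n$ of the boundary of $\mathcal{C}_{n-1}(x)$, i.e. when $a_n(x)$ is an extreme admissible digit, for then $B(x,r_n)$ can leave $\mathcal{C}_{n-1}(x)$ and pick up mass from a neighbouring cylinder. This is why the conclusion is stated only for $\nu$-almost every $x$: a Borel--Cantelli argument, using that under $\nu$ the digit $a_n(x)$ fails to be extreme for a positive proportion of $n$, guarantees infinitely many good indices $n$ for almost every $x$, and the $\limsup$ requires no more than these.
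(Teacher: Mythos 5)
Your construction of $\nu$ is essentially the paper's: a product measure over the digit blocks $(\Phi(l_{n-1}),l_n]$ with weights proportional to $\xi_i^{1/d-\epsilon}$ (the paper normalises by raising to an exponent $s_n\geq 1/d-\epsilon$ solving $\sum\xi_i^{s_n}=1$ rather than by dividing by $S_n$, but this is immaterial, and part (1) goes through either way). The gap is in the counting step of part (2). Your bound on the number of siblings met by $B(x,r_n)$ rests on the claim that the children of $\mathcal{C}_{n-1}(x)$ are linearly ordered by digit and that the small, large-digit children ``accumulate at one end'' at distance $\asymp|\mathcal{C}_{n-1}(x)|a_n^{-(d-1)}$ from $x$. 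That is the picture for the Gauss map, but a general $d$-decaying system only assumes the images $f_i((0,1))$ are disjoint: the tiny cylinders may be interleaved arbitrarily and may sit immediately adjacent to $\mathcal{C}_n(x)$, so this geometric argument fails. What actually saves the count --- and what your write-up never invokes --- is Lemma \ref{lem:ln}: the charged digits at level $n$ all lie in $(\Phi(l_{n-1}),l_n]$ with $l_n<\gamma\Phi(l_{n-1})$, so every sibling carrying $\nu$-mass has length comparable to $|\mathcal{C}_n(x)|$ (up to the $l_n^{O(\epsilon)}$ factors that must be tracked and shown to be $\mathit{o}(\log r_n)$), and a ball of radius $r_n$ meets only boundedly many of them no matter how they are arranged; the ``much larger digit'' siblings you worry about are simply $\nu$-null and irrelevant.

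Your Borel--Cantelli treatment of the boundary also needs tightening: excluding only the single leftmost and rightmost charged digits does not force $B(x,r_n)\subset\mathcal{C}_{n-1}(x)$, because the one flanking charged sibling may be shorter than $r_n=|\mathcal{C}_n(x)|$ by a constant factor, so the ball can still escape into a neighbouring $(n-1)$-cylinder whose charged children then contribute extra mass. The paper sidesteps both difficulties at once: it deterministically prunes the two extreme charged digits from the support at every level (so no Borel--Cantelli is needed) and takes the radius $R_n=\min_{j\in I(n)}\xi_{a_1(x)}\cdots\xi_{a_n(x)}\xi_j$, the size of the smallest charged child, so that $B(x,R_n)\subset\mathcal{C}_n(x)$ and meets at most two charged children. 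Your scheme can be repaired along the same lines (shrink the radius to the minimal charged-child size, or exclude a bounded number of extreme digits and verify that the non-extreme event still occurs infinitely often almost surely), but as written the key estimate $\nu(B(x,r_n))\leq C\,\nu(\mathcal{C}_n(x))$ is not justified for a general $d$-decaying system.
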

\begin{proof}
We start by fixing a positive integer $n$ considering the set of
integers $I(n):=[\Phi(L(n))+1,l(n+1)]$. We will then refine this
set by removing the integers which refer to the left most and
right most intervals. To be precise let

$$I'(n)=\{i\in I_n:\exists j,k\in I_n\text{ with } f_j([0,1])\leq f_i([0,1])\leq f_k([0,1])\}$$
(where $J_1\leq J_2$, is to be understood as: interval $J_1$ is to
the left of the interval $J_2$). We denote by $s_n$ the value such
that

$$\sum_{i\in I'(n)}\xi_i^{s_n}=1$$
and note that each $s_n\geq\frac{1}{d}-\epsilon$. For each $n$ we
will define a finite measure $\mu_n$ supported on the finite
sigma-algebra given by the sets $\{f_i([0,1])\}_{i\in I'(n)}$ and
satisfying that $\mu_n(\mathcal{C}_{\omega_i})=\xi_i^{s_n}$. We
can then let $\nu_n=\otimes_{k=0}^{n-1}\mu_k\circ T$ and note that
the extension $\nu$ of these measures (Kolmogorov) will be
supported on a subset of $Y_{\beta,\epsilon}$.

Note that for any cylinder $\mathcal{C}_{\omega_1\ldots \omega_n}$ we have that

$$\nu(\mathcal{C}_{\omega_1\ldots \omega_n})=\xi_{\omega_1}^{s_1}\cdots\xi_{\omega_n}^{s_n}$$
and we can immediately deduce \ref{p1}.

For \ref{p2} let $x\in\text{supp}(\mu)$ and fix an $n$. We can
then deduce that $a_{n+1}(x)\in I'(n)$. Now consider the set of
cylinders

$$Z_n=\{\Pi([a_1(x),\ldots,a_n(x),j]\}_{j\in I(n)}$$
and let $R_n=\min_{j\in I(n)}\xi_{a_1(x)}\cdots\xi_{a_n(x)}\xi_j$.
We know that $x\in [a_1(x),\ldots,a_n(x),j]$ for some $j\in I'(n)$
therefore $B(x,R_n)\subset C_n(x)$ and $B(x,R_n)$ will intersect
at most two members of $Z_n$. Therefore we have that

\begin{eqnarray*}
\mu(B(x,R_n)&\leq& 2C_2^{s_{n+1}}\xi_{a_1(x)}^{s_1}\cdots\xi_{a_n(x)}^{s_n}(\Phi(l_n)+1)^{s_{n+1}(-d+\epsilon)}\\
&\leq&2\gamma^{-s_{n+1}(-d-\epsilon)}l_{n+1}^{2\epsilon}
C_2^{s_{n+1}}(\xi_{a_1(x)}\cdots\xi_{a_n(x)})^{1/d-\epsilon}(l_{n+1})^{s_{n+1}(-d-\epsilon)}.
\end{eqnarray*}
Thus if we take logarithms we have that
$$\log\mu(B(x,R_n))\leq (1/d-\epsilon)\log R_n+2\epsilon\log l_{n+1}+\mathit{o}(\log R_n)$$
and to complete the proof we notice that $\log l_{n+1}/\log R_n$
is uniformly bounded.
\end{proof}

\section{Proofs of Theorems \ref{thm:main} and \ref{packing}} \label{2}
\subsection*{Proof of Theorem \ref{thm:main}}

We fix $d>1$, a $d$-decaying system $\{f_i\}_{i=1}^{\infty}$ and a
  function $\Phi:\N\rightarrow\R$ satisfying
$n\leq \Phi(n)\leq \beta n$ for all $n\in\N$ and some $\beta\geq 1$. To
prove the upper bound we note that for any $k\in\N$
$$X_{\Phi}\subset\bigcup_{l\leq k} \bigcup_{a_1<\ldots<a_l\leq k} f_{a_l}\circ\ldots\circ f_{a_1}(\Lambda_k).$$
Since the maps $f_i$ are bi-Lipschitz, it then follows by Lemma
\ref{subsyst} that $\dim_H X_{\Phi}\leq\frac{1}{d}$.

To compute the lower bound for any $x\in X_{\Phi}$ and $n\in\N$ we
let $r_n(x):=|\mathcal{C}_n(x)|$. We can freely assume that
$\beta$ is strictly greater than 1 (If $\Psi\geq \Phi$ then $X_{\Psi}\subset X_{\Phi}$). We then have the following
result
\begin{lem} \label{lem:rn}
For any $\delta>0$ there exist $l>0$ and $N>0$ such that for any
$x,y\in Y_{\Phi,\epsilon}$ and $n>N$ we have
\[
r_n(x) > \left(r_{n+l}(y)\right)^{1+\delta}.
\]
\end{lem}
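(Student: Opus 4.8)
The lemma compares cylinder sizes $r_n(x)$ for points $x,y$ in the set $Y_{\Phi,\epsilon}$. Recall that for $x\in Y_{\Phi,\epsilon}$ the digits satisfy $\Phi(l_n) < a_n(x) \le l_{n+1}$, where $l_{n+1}=l(l_n)$ and $l(n)$ is the minimal integer so that $\sum_{i=[\Phi(n)]+1}^{l(n)}\xi_i^{1/d-\epsilon}\ge 1$. The key point is that the digit $a_n(x)$ lies in the window $(\Phi(l_n), l_{n+1}]$, which is controlled by Lemma~\ref{lem:ln}: the ratio $l_{n+1}/\Phi(l_n) < \gamma$. So all the $n$-th digits of points in $Y_{\Phi,\epsilon}$ are comparable up to a bounded factor $\gamma$ — they all live in essentially the same dyadic scale of integers. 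The claim $r_n(x) > (r_{n+l}(y))^{1+\delta}$ says: a cylinder of depth $n$ for $x$ is not too much smaller than a cylinder of depth $n+l$ for $y$, once we absorb a few extra levels $l$ and raise to a power slightly bigger than $1$.

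**Plan for $\Phi$ with linear growth.** Let me think about this carefully...

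The plan is to estimate $r_n(x)$ from above and below using property~\eqref{three} of the $d$-decaying system. Since $\xi_i \le |f_i'| \le \lambda_i$ and $r_n(x) = |\mathcal{C}_n(x)| \asymp \prod_{k=1}^n |f'_{a_k(x)}|$ (up to the bounded-distortion constants implicit in bi-Lipschitzness), I would first write
$$
\prod_{k=1}^n \xi_{a_k(x)} \;\lesssim\; r_n(x) \;\lesssim\; \prod_{k=1}^n \lambda_{a_k(x)}.
$$
Using $k^{-d-\epsilon}\le \xi_k \le \lambda_k \le k^{-d+\epsilon}$ (valid once $a_k \ge K$, which holds for $k$ large since $a_k(x) > \Phi(l_k) \ge l_k \ge K$), I get $-\log r_n(x) \asymp d\sum_{k=1}^n \log a_k(x)$ up to an error of size $\epsilon \sum \log a_k$. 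The first task is therefore to show that $\sum_{k=1}^n \log a_k(x)$ is comparable for all $x,y \in Y_{\Phi,\epsilon}$ at the same depth. This is where Lemma~\ref{lem:ln} enters: since $\Phi(l_k) < a_k(x) \le l_{k+1}$ and $l_{k+1} < \gamma \Phi(l_k) \le \gamma\beta l_k$, every $n$-th digit lies in the interval $(\,l_k,\ \gamma\beta l_k\,]$ (using $\Phi(l_k)\ge l_k$), so $\log a_k(x) = \log l_k + O(1)$ uniformly in $x$. Hence $\sum_{k=1}^n \log a_k(x) = \sum_{k=1}^n \log l_k + O(n)$, the same for every point up to an additive $O(n)$ error.

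**Turning comparability into the inequality.** With $S_n := \sum_{k=1}^n \log l_k$, I would establish $-\log r_n(x) = d\,S_n + O(\epsilon S_n) + O(n)$ for each $x\in Y_{\Phi,\epsilon}$. The inequality $r_n(x) > (r_{n+l}(y))^{1+\delta}$ is equivalent, after taking $-\log$, to
$$
-\log r_n(x) \;<\; (1+\delta)\bigl(-\log r_{n+l}(y)\bigr),
$$
so it suffices to show $d\,S_n + O(\epsilon S_n + n) < (1+\delta)\bigl(d\,S_{n+l} + O(\epsilon S_{n+l}+n)\bigr)$. Since $S_{n+l} \ge S_n$ (the $l_k$ are increasing and $\ge K > 1$, so each $\log l_k > 0$), and since $S_n \to \infty$, the dominant term on the right exceeds that on the left as soon as $\delta$ dominates the relative error $O(\epsilon + n/S_n)$. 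The final step is to choose $l$ large enough and $N$ large enough so that the gain from the extra $l$ levels plus the factor $(1+\delta)$ beats the accumulated additive and $\epsilon$-errors; concretely one checks $S_{n+l}/S_n \to 1$ slowly enough and $n/S_n$ is bounded (indeed $\to 0$, since $l_k$ grows at least geometrically by Lemma~\ref{lem:ln}, forcing $\log l_k$ to grow, so $S_n$ grows super-linearly).

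**Main obstacle.** The delicate point is controlling the ratio $S_{n+l}/S_n$ uniformly in $n$ and verifying that $n/S_n\to 0$, since the whole inequality hinges on the additive $O(n)$ and multiplicative $O(\epsilon)$ errors being swamped by $\delta$. I expect the growth of $l_k$ to be the crux: from Lemma~\ref{lem:ln} and $\Phi(l_k)\le \beta l_k$ one gets $l_{k+1}$ comparable to $l_k$ up to the factor $\gamma\beta$, so that $l_k$ grows roughly geometrically and $\log l_k \sim ck$ for some $c>0$, whence $S_n \sim c n^2/2$ and $n/S_n \to 0$. Making this quantitative, and then pinning down the interplay between the choice of $l$, the fixed $\delta$, and the error terms so that the strict inequality holds for all $n>N$ simultaneously, is the step requiring genuine care; the rest is bookkeeping with the bounds from property~\eqref{three}.
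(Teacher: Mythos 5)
Your overall strategy --- compare $-\log r_n(x)$ and $-\log r_{n+l}(y)$ via $S_n=\sum_{k\le n}\log l_k$, use Lemma \ref{lem:ln} to make all $n$-th digits comparable up to $\gamma$, and use the geometric growth of $l_k$ (hence $S_n\asymp n^2$) to kill the $O(n)$ additive errors --- is the same computation as the paper's, just written additively. But there is a genuine gap in how you handle the multiplicative error, and you flag it yourself without resolving it: you estimate $\xi_k,\lambda_k$ using the \emph{fixed} $\epsilon$ from the definition of $Y_{\Phi,\epsilon}$, i.e.\ $k^{-d-\epsilon}\le\xi_k\le\lambda_k\le k^{-d+\epsilon}$. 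This gives $-\log r_n(x)\le (d+\epsilon)S_n+O(n)$ and $-\log r_{n+l}(y)\ge (d-\epsilon)S_{n+l}-O(n)$, so the target inequality reduces to
\[
\bigl(d\delta-\epsilon(2+\delta)\bigr)S_n+(1+\delta)(d-\epsilon)\bigl(S_{n+l}-S_n\bigr)>O(n).
\]
When $\delta<2\epsilon/(d-\epsilon)$ the leading coefficient is negative, and the deficit is of order $\epsilon S_n\asymp\epsilon n^2$, whereas the gain from $l$ extra levels is only $S_{n+l}-S_n=O(ln)$; no fixed $l$ and $N$ can then make the inequality hold for all $n>N$. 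Since the lemma is quantified ``for any $\delta>0$'' with $\epsilon$ fixed beforehand, your argument only proves it for $\delta$ large relative to $\epsilon$. (Relatedly, the extra $l$ levels do essentially no work in your set-up; in the regime where your argument succeeds, $l=1$ already suffices.)

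The missing idea is that condition (\ref{three}) of a $d$-decaying system holds for \emph{every} $\varepsilon>0$ with constants $C_1(\varepsilon),C_2(\varepsilon)$, so you are free to estimate the cylinder lengths with a fresh parameter $\varepsilon'=\delta/2d$ chosen so that $d\delta-\varepsilon'(2+\delta)>0$; the price is only the constants $C_1(\varepsilon'),C_2(\varepsilon')$, which contribute $O(n)$ to the logarithm and are absorbed exactly as your other $O(n)$ terms. This is precisely what the paper's proof does by writing $C_1(\delta/2d)$ and $C_2(\delta/2d)$. With that single change, your bookkeeping (including $S_n\asymp n^2$, which does require the reduction to $\Phi(n)\ge\beta n$ with $\beta>1$ to get the geometric lower bound on $l_k$, not just Lemma \ref{lem:ln}, which only bounds the growth from above) goes through.
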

\begin{proof}
   By applying Lemma \ref{lem:ln} we can calculate that for any $l\in\N$
\begin{eqnarray*}
\frac {r_n(x)} {(r_{n+l}(y))^{1+\delta}} &=& \frac {r_n(x)} {(r_n(y))^{1+\delta}}  \cdot \left( \frac
{r_n(y)} {r_{n+l}(y)}\right)^{1+\delta}\\
&\geq&\left(\frac{C_1(\delta/2d)}{(C_2(\delta/2d)\gamma)^{1+\delta}}\right)^n \cdot \frac{1}{(C_1(\delta/2d))^{l(1+\delta)}} \beta^{ndl(1+\delta)}.
\end{eqnarray*}
Thus if we choose $l$ large enough such that
   $$\beta^{ld(1+\delta)}>\frac{1}{(C_1(\delta/2d))^{l(1+\delta)}}\left(\frac{C_1(\delta/2d)}{(C_2(\delta/2d) \gamma)^{1+\delta}}
\right)$$
then the proof is complete.
\end{proof}
Hence, for any $x\in Y_{\Phi,\varepsilon}$, $n>N$, and
$(r_{n+l+1}(x))^{1+\delta}\leq r\leq (r_{n+l}(x))^{1+\delta}$, the set $B_r(x)\cap
Y_{\Phi,\varepsilon}$ will be contained in $\mathcal{C}_n(x)\cup \mathcal{C}_n(y)$
for some $y\in Y_{\Phi, \varepsilon}$. Thus we will have

\[
\liminf_{r\rightarrow 0}\frac{\log\nu(B(x,r))}{\log r}\geq \liminf_{n\to \infty} \inf_{y\in
Y_{\beta, \varepsilon}} \frac {(1/d-\epsilon) \log r_n(y) +\log 2}
{(1+\delta) \log r_{n+2l+1}(y)}.
\]
The only thing missing in the proof of Theorem \ref{thm:main} is a
comparison of sizes of $r_n(x)$ and $r_{n+1}(x)$.

\begin{lem}
There exists a sequence $v_n\to 1$ such that for every $x\in
Y_{\beta,\epsilon}$,
\[
\frac {\log r_{n+1}(x)} {\log r_n(x)} <v_n
\]
\end{lem}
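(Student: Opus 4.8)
The plan is to pin the ratio to $1$ by showing that the single-step increment $\log r_{n+1}(x)-\log r_n(x)$ has order $\log a_{n+1}(x)$, which is negligible against $\log r_n(x)$, itself a sum of $n$ such increments. First I would pass from cylinder sizes to digit sums. Set $S_n(x)=\sum_{i=1}^n\log a_i(x)$. Applying the mean value theorem and the chain rule to $f_{a_1(x)}\circ\cdots\circ f_{a_n(x)}$, and using the derivative bounds $k^{-d-\epsilon}\le\xi_k\le\lambda_k\le k^{-d+\epsilon}$ (valid because every digit satisfies $a_i(x)>\Phi(l_i)\ge l_i\ge K$), gives
\[
(d-\epsilon)S_n(x)\le -\log r_n(x)\le (d+\epsilon)S_n(x).
\]
Since $\log r_n(x)<0$, this reduces the lemma to controlling $S_{n+1}(x)/S_n(x)$, because
\[
\frac{\log r_{n+1}(x)}{\log r_n(x)}\le\frac{d+\epsilon}{d-\epsilon}\cdot\frac{S_{n+1}(x)}{S_n(x)}=\frac{d+\epsilon}{d-\epsilon}\left(1+\frac{\log a_{n+1}(x)}{S_n(x)}\right).
\]

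Next I would estimate the two ingredients uniformly over $x\in Y_{\Phi,\epsilon}$. For the denominator, note that $\{l_n\}$ is strictly increasing (since $l(m)>\Phi(m)\ge m$) with $l_1=K$, so $l_n\ge n$ and hence $a_i(x)>\Phi(l_i)\ge l_i\ge i$; this forces the \emph{superlinear} lower bound $S_n(x)\ge\sum_{i=2}^n\log i=\log n!\sim n\log n$. For the numerator I would combine Lemma~\ref{lem:ln} with the hypothesis $\Phi(n)\le\beta n$: together they give $l_{n+1}<\gamma\,\Phi(l_n)\le\gamma\beta\, l_n$, so $l_n\le(\gamma\beta)^{n-1}K$, and since $a_{n+1}(x)\le l_{n+2}$ this yields $\log a_{n+1}(x)=O(n)$. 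Hence $\log a_{n+1}(x)/S_n(x)=O(n)/(n\log n)\to 0$ uniformly in $x$, so $\sup_{x\in Y_{\Phi,\epsilon}}S_{n+1}(x)/S_n(x)\to 1$, and I would take $v_n=\tfrac{d+\epsilon}{d-\epsilon}$ times this supremum.

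The step I expect to be the main obstacle is making the numerator genuinely negligible, that is the comparison $\log a_{n+1}(x)=o(S_n(x))$. This is exactly where linearity of $\Phi$ is indispensable: only the bound $l_{n+1}\le\gamma\beta\, l_n$ keeps $\log l_n$ linear in $n$, whereas a superlinear $\Phi$ (say $\Phi(n)=n^{\alpha}$) makes $\log l_n$ grow geometrically, so that the last digit carries a fixed proportion of $\log r_n$ and the ratio no longer tends to $1$ — consistent with the smaller dimension in Theorem~\ref{Gauss}. A secondary subtlety is the distortion factor $\tfrac{d+\epsilon}{d-\epsilon}$: with only the crude derivative bounds of a general $d$-decaying system this constant exceeds $1$, and I would absorb it by exploiting that $\epsilon>0$ is a free parameter sent to $0$ only after $\delta$ (equivalently $l$) has been fixed in the surrounding argument, so the residual factor tends to $1$; for systems with bounded distortion of compositions (for instance the continued fraction maps) one obtains $v_n\to 1$ directly with no residual constant.
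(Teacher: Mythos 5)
Your two quantitative estimates are exactly the ones the paper uses: on $Y_{\Phi,\epsilon}$ every digit satisfies $a_i(x)>\Phi(l_i)\geq l_i\geq i$, so $-\log r_n(x)\geq (d-\epsilon)\log n!\gtrsim n\log n$ uniformly, while Lemma \ref{lem:ln} together with $\Phi(n)\leq\beta n$ gives $l_{n+1}\leq\gamma\beta l_n$ and hence $\log a_{n+1}(x)=O(n)$ uniformly. The gap is in how you assemble them. Bounding $-\log r_{n+1}(x)$ from above by $(d+\epsilon)S_{n+1}(x)$ and $-\log r_n(x)$ from below by $(d-\epsilon)S_n(x)$ only yields $v_n\to\frac{d+\epsilon}{d-\epsilon}>1$, which is not the statement being proved: the lemma is asserted for a fixed $\epsilon$, and the set $Y_{\Phi,\epsilon}$, the sequence $l_n$ and the measure $\nu$ are all constructed for that fixed $\epsilon$, so there is no room to ``send $\epsilon\to 0$'' inside its proof. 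Your proposed rescue (absorb the constant later in the order of limits) would at best salvage the final dimension bound, not the lemma, and even that would require tracking how $l$ and $\gamma$ depend on $\delta$ and $\epsilon$ in Lemma \ref{lem:rn}; as written this is a real hole.

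The fix is one line and sidesteps the two-sided comparison entirely. Since $r_{n+1}(x)\geq\xi_{a_{n+1}(x)}\,r_n(x)$ and $\log r_n(x)<0$,
\[
\frac{\log r_{n+1}(x)}{\log r_n(x)}\;\leq\; 1+\frac{\log\xi_{a_{n+1}(x)}}{\log r_n(x)},
\]
and the second term is a ratio of $O(n)$ to a quantity of order at least $n\log n$, hence tends to $0$ uniformly in $x$ by exactly the two estimates you already established. This is the paper's argument; the point is that it never needs an upper bound on $-\log r_{n+1}$ in terms of $S_{n+1}$, which is precisely where your spurious factor $\frac{d+\epsilon}{d-\epsilon}$ entered. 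Your closing remark about why linearity of $\Phi$ is essential (geometric versus superexponential growth of $l_n$) is correct and consistent with Theorem \ref{Gauss}.
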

\begin{proof}
We have that
\[
r_{n+1}(x) \geq \xi_{a_{n+1(x)}} r_n(x).
\]
Thus it suffices to show that $\frac{\log \xi_{a_{n+1}(x)}}{\log
r_n(x)}$ tends to $0$ uniformly in $x$. For $\epsilon>0$ we have
that for all $x$

$$r_n(x)\leq\prod_{i=1}^n \frac{C_2}{i^{d-\epsilon}}\leq \frac{C_2^n}{(n!)^{d-\epsilon}}.$$
On the other hand by Lemma  and the definition of $\Phi$

$$l_n+1\leq\gamma\Phi(l_n)\leq\beta\gamma l_n.$$
Thus $a_{n+1}(x)\leq \beta\gamma^{n}$  and so
$\xi_{a_{n+1}(x)}\geq (\beta\gamma)^{-n(d+\epsilon)}$ and the
result follows.
\end{proof}

\subsection*{Proof of Theorem \ref{packing}}
We fix $d>1$, a $d$-decaying system $\{f_i\}_{i=1}^{\infty}$  and
a function $\Phi:\N\rightarrow \R$ such that $\Phi(n)\geq n$. We will let
$s_0=\overline{\dim}_B(\{f_i(0)\}_{i=1}^{\infty})$. To show that
$\dimp\leq\min\{1/d,s_0\}$ we simply replicate the upper bound in
the proof of Theorem \ref{thm:main} with Lemma \ref{subsystpack}
replacing Lemma \ref{subsyst}. The fact that $\dimp X_{\Phi}\leq
\frac{1}{d}$ can immediately be deduced from Lemma
\ref{measure}.

We now turn to the case where $s_0\geq\frac{1}{d}$. First we need
to show that the upper box counting dimension and the packing
dimension of $X_{\Phi}$ are the same.

\begin{lem}\label{boxeqpacking}
We have that for any function $\Phi:\N\to\N$ with $\Phi(n)\geq n$
$$\dimp X_\Phi=\overline{\dim}_B X_{\Phi}.$$
\end{lem}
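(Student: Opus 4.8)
The inequality $\dimp X_\Phi\leq\overline{\dim}_B X_\Phi$ holds for every set (see \cite{Falcbook}), so the whole content is the reverse inequality. The plan is to exploit a strong homogeneity of $X_\Phi$: every cylinder piece carries the full upper box dimension of the whole set, after which the equality follows from the characterisation of packing dimension as a modified box dimension together with a Baire category argument. For $m\geq 0$ write $X_\Phi^{>m}=\Pi\{\underline a:a_1>m \text{ and } a_{n+1}>\Phi(a_n)\ \forall n\}$ for the points of $X_\Phi$ whose first digit exceeds $m$, and set $t_m=\overline{\dim}_B X_\Phi^{>m}$, so that $t_0=\overline{\dim}_B X_\Phi=:s$.

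The key homogeneity statement I would prove first is that $t_m=s$ for all $m$. This rests on the self-referential decomposition: for any integers $a<b$,
$$
X_\Phi^{>a}=\Big(\bigcup_{a<k\leq b} f_k\big(X_\Phi^{>\Phi(k)}\big)\Big)\cup X_\Phi^{>b},
$$
which is a \emph{finite} union. Since the maps $f_k$ are bi-Lipschitz they preserve upper box dimension, and upper box dimension of a finite union is the maximum of the pieces, so $t_a=\max\big(\max_{a<k\leq b}t_{\Phi(k)},\,t_b\big)$. As $t_m$ is non-increasing in $m$ (monotonicity of $\overline{\dim}_B$ under inclusion), the set $\{m:t_m=s\}$ is an initial segment $\{0,1,\dots,M\}$. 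If $M<\infty$, apply the identity with $a=M$, $b=M+1$: then $t_{M+1}<s$ forces $t_{\Phi(M+1)}=s$, hence $\Phi(M+1)\leq M$, contradicting $\Phi(M+1)\geq M+1$. Therefore $M=\infty$ and $t_m=s$ for all $m$. Consequently, for every cylinder $\mathcal{C}_n(x)$ meeting $X_\Phi$ we have $X_\Phi\cap\mathcal{C}_n(x)=f_{a_1(x)}\circ\cdots\circ f_{a_n(x)}\big(X_\Phi^{>\Phi(a_n(x))}\big)$, whence $\overline{\dim}_B\big(X_\Phi\cap\mathcal{C}_n(x)\big)=s$.

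To conclude, recall that packing dimension equals the modified box dimension, $\dimp X_\Phi=\inf\{\sup_i\overline{\dim}_B E_i: X_\Phi\subseteq\bigcup_i E_i\}$ (see \cite{Falcbook}). Given such a countable cover, replacing each $E_i$ by its closure changes neither the box dimensions nor the covering property, so I may assume the $E_i$ are closed. Rather than wrestle with the closure of $X_\Phi$ in $[0,1]$, I would run the category argument symbolically: the set $\Sigma=\{\underline a\in\N^{\N}:a_{n+1}>\Phi(a_n)\ \forall n\}$ is closed in $\N^{\N}$, hence Polish and therefore Baire, and $\Pi\colon\Sigma\to X_\Phi$ is continuous and surjective. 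Pulling back, $\Sigma=\bigcup_i\Pi^{-1}(E_i)$ is a countable union of closed sets, so by Baire category some $\Pi^{-1}(E_i)$ has nonempty interior, i.e. contains a symbolic cylinder $[a_1,\dots,a_n]\cap\Sigma$. Projecting, $E_i\supseteq X_\Phi\cap\mathcal{C}_n$, and by the homogeneity just established $\overline{\dim}_B E_i\geq s$. Thus $\sup_i\overline{\dim}_B E_i\geq s$ for every cover, giving $\dimp X_\Phi\geq s=\overline{\dim}_B X_\Phi$.

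The step I expect to require the most care is the passage from homogeneity to the Baire argument: upper box dimension is badly behaved under countable unions, so the homogeneity lemma must be proved using only finite decompositions (as above), and the category argument must be carried out on the symbolic space $\Sigma$ to avoid the fact that $X_\Phi$ need not be closed in $[0,1]$ and that its closure may contain extra boundary points not covered by the $E_i$.
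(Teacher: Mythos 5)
Your proof is correct and is essentially the argument the paper delegates to the proof of Theorem 3.1 in \cite{MU}: establish that every cylinder piece of $X_\Phi$ has full upper box dimension via the finite self-referential decomposition, then combine the modified--box-dimension characterisation of packing dimension with a Baire category argument. Your choice to run the category argument on the closed symbolic set $\{\underline a\in\N^{\N}:a_{n+1}>\Phi(a_n)\ \forall n\}$ rather than on the (possibly larger) closure of $X_\Phi$ in $[0,1]$ is a clean and valid way to supply the completeness that the Baire step needs.
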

\begin{proof}
It can easily be seen that the proof of Theorem 3.1 in \cite{MU} can be applied in this situation.
\end{proof}

We let $J$ denote the closure of $X_{\Phi}$ and note that by Lemma
\ref{boxeqpacking} we can deduce that $\dimp
X_\Phi=\overline{\dim}_B J$. We will let $v$ be some accumulation
point of $\{f_i(1)\}$. We then have that
$J\supset\{f_i(v)\}_{i=1}^{\infty}$ and so $\overline{\dim}_B
J\geq s_0$ and the result immediately follows by Lemma
\ref{boxeqpacking}.

\section{Proof of Theorem \ref{Gauss}}

We fix $d>1$, a Gauss like $d$-decaying system
$\{f_i\}_{i=1}^{\infty}$, $\alpha>1$ and a function
$\Phi:\N\rightarrow \R$ such that $\Phi(n)= n^{\alpha}$. Denote
$s=1/(1+\alpha (d-1))$. It is enough to prove that for every $K>1$
\[
\dim_H X_{\Phi,K} =\frac 1 {1+\alpha (d-1)},
\]
where
\[
X_{\Phi, K} = \{x\in X_\Phi; a_1(x)=K\}.
\]
Indeed, we have
\[
X_{\Phi, 2} \subset X_\Phi = \{x_0\} \cup \bigcup_{n=0}^\infty
\bigcup_{K=2}^\infty f_1^n X_{\Phi, K},
\]
where $x_0$ is the fixed point of $f_1$. We fix $K>1$, $\delta>0$
and denote $C_1=C_1(\delta), C_2=C_2(\delta)$.

Given $x\in X_\Phi$ we define
\[
\Delta_n(x) = \bigcup \{\mathcal{C}_{n+1}(y); y\in
\mathcal{C}_n(x)\cap X_\Phi\}.
\]
Obviously, it is the union of all $(n+1)$-st level subcylinders of
$\mathcal{C}_n(x)$, where the $(n+1)$-st coordinate is at least
$a_n(x)^\alpha$. We have
\[
C_1^n \prod_{i=1}^n a_i(x)^{-d-\delta} \leq |\mathcal{C}_n(x)| \leq C_2^n
\prod_{i=1}^n a_i(x)^{-d+\delta}
\]

and
\begin{equation} \label{eqdelta}
C_1^{n+1} C_4^{-1} a_n(x)^{-(d+\delta-1)\alpha} \prod_{i=1}^n a_i(x)^{-d-\delta} \leq
|\Delta_n(x)| \leq C_2^{n+1} C_4 a_n(x)^{-(d-\delta-1)\alpha} \prod_{i=1}^n
a_i(x)^{-d+\delta}.
\end{equation}
We will distribute on $X_{\Phi, K}$ a probabilistic measure
$\mu$, satisfying $\mu(a_1(x)=K) =1$ and

\begin{equation}\label{lzor}
          \mu(a_{n+1}(x)=j|a_n(x)=i) =
              \begin{cases}
              0& \text{ if } j<i^\alpha,\\
              c_i i^{\alpha (d-1) s} j^{-(d+\alpha (d-1))s}  &\text{ if } j\geq i^\alpha,
              \end{cases}
\end{equation}
where
\[
c_i = \frac 1 {\sum_{j\geq i^\alpha} i^{\alpha (d-1) s}
j^{-(d+\alpha (d-1))s}}
\]
is a normalising constant. It is easy to check that for some $C_3>1$ we
have
\[
C_3^{-1} \leq c_i \leq C_3
\]
for all $i$ (in fact, $c_i\to (d+\alpha (d-1))s +1$ as $i\to
\infty$).

The reason we have chosen the measure $\mu$ in this way is that  for all $x\in X_{\Phi, K}$ we have for each $n$

\begin{eqnarray*}
&&C_3^{-n} \prod_{i=2}^n a_i(x)^{-ds} \cdot a_1(x)^{\alpha (d-1)s} a_n(x)^{-\alpha (d-1) s} \leq  \mu(\Delta_n(x)) = \mu(\mathcal{C}_n(x))\\
&&\leq C_3^n \prod_{i=2}^n a_i(x)^{-ds} \cdot a_1(x)^{\alpha (d-1)s} a_n(x)^{-\alpha (d-1) s}
\end{eqnarray*}


Comparing this with \eqref{eqdelta} we have that for all $x\in X_{\Phi, K}$
\begin{equation} \label{delta}
C_5^{-n} |\Delta_n(x)|^{(1+c\delta)s} \leq
\mu(\Delta_n(x)) \leq C_5^n |\Delta_n(x)|^{(1-c\delta)s}.
\end{equation}

Note that

\begin{equation} \label{n2}
|\Delta_n(x)| < |\mathcal{C}_n(x)| \leq \prod_{i=1}^n C_2 K^{-(d-\delta)\alpha^{i-1}}
= C_2^n K^{-(d-\delta) (\alpha^n-1)/(\alpha -1)}.
\end{equation}
Hence for $x\in X_{\Phi,K}$ we can calculate
\begin{eqnarray*}
\frac{\log\mu(B(x,|\Delta_n(x)|))}{\log|\Delta_n(x)|}&\leq&\frac{\log\mu(\Delta_N(x))}{\log |\Delta_n(x)|}\\
&\leq& s(1+c\delta)+\frac{\mathit{o}(\log|\Delta_n(x)|)}{\log |\Delta_n(x)|}.
\end{eqnarray*}
Thus we can conclude that
\[
\dim_H X_{\Phi,K} \leq s(1+c\delta).
\]

For the lower bound on the Hausdorff dimension we will use
Frostman Lemma, again. Denote
\[
r_n(x)=|\Delta_n(x)|, \ R_n(x)=|\mathcal{C}_n(x)|.
\]

We already know that
\[
\lim_{n\to \infty} \frac {\log \mu(\Delta_n(x))} {\log
|\Delta_n(x)|} \geq s(1-c\delta).
\]
$B_{r_n(x)}(x)$ contains $\Delta_n(x)$ and might intersect at most
one other $\Delta_n(y)$. Moreover, this $\Delta_n(y)$ must be a
neighbouring one, which means that $a_i(x)=a_i(y)$ for $i<n$ and
$|a_n(x)-a_n(y)|=1$. Hence, by \eqref{delta} we have that
$$\mu(B_{r_n(x)}(x)) \leq (2+\varepsilon) C_5^n r_n(x)^{(1-c\delta)s}.$$
We then have that
\[
\log\mu(B_{r_n(x)}(x))\leq (1-c\delta)s\log r_n + \mathit{o}(\log r_n).
\]

We need to use this estimate to find $\frac{\log\mu(B_r(x))}{\log
r}$ for $r_n(x)<r<R_n(x)$ and $R_{n+1}(x)<r<r_n(x)$. The first of
these ranges is easy: each $\mathcal{C}_n(x)\setminus \Delta_n(x)$ has
length comparable to $|\mathcal{C}_n(x)|$. Hence, the ball $B_r(x)$ for
$r_n(x)<r<R_n(x)$ will be much bigger than $B_{r_n(x)}(x)$ but
will still intersect at most $\Delta_n(x)$ plus one more
$\Delta_n(y)$. So, in this range
\[
\frac {\log \mu(B_r(x))} {\log r} \geq (1-c\delta)s-\mathit{o}(1).
\]

In the range $R_{n+1}(x)<r<r_{n}(x)$ the ball $B_r(x)$ will
actually intersect several $\mathcal{C}_{n+1}(y), y\in X_{\Phi,
K}$. Let us define

\[
D_r(x) = \bigcup \{\mathcal{C}_{n+1}(y); y\in X_{\Phi, K} \cap B_r(x)\}.
\]
Note that $\mu(D_r(x)) \geq \mu(B_r(x))$ but $|D_r(x)| \leq 2
C_2^{n+1}/C_1^{n+1} r^{1-\delta}$. Hence, we can use $D_r(x)$ instead of
$B_r(x)$ to estimate the local dimension of $\mu$ at $x$ and the
estimation will change at most by a factor $(1\pm \delta)$.

The set $D=D_r(x)$ is an union of consecutive $n+1$-st level
cylinders $\mathcal{C}_{n+1}(y)$ with $a_i(y)=a_i(x)$ for $i\leq
n$ and $l_1\leq a_{n+1}(y) \leq l_2$, where $l_1\geq
a_{n}(x)^\alpha$ and $l_2\leq \infty$. We have
$\mathcal{C}_n(x)=\bigcup_{i=l_1}^{l_2} \mathcal{C}_{n+1}(y_i)$
(where $y_i$ is a point from $\mathcal{C}_n(x)\cap X_{\Phi,K}$
with $n+1$-st symbol in the symbolic expansion equal to $i$. We
have

\[
|\mathcal{C}_{n+1}(y_i)| \geq i^{-d} |\mathcal{C}_n(x)|^{1+c\delta},
\]

hence

\[
|D| \geq |\mathcal{C}_n(x)|^{1+c\delta} \sum_{i=l_1}^{l_2} i^{-d} \approx
(l_1^{-(d-1)} - l_2^{-(d-1)}) |\mathcal{C}_n(x)|^{1+c\delta}.
\]

We also have

\[
|\Delta_{n+1}(y_i)| \leq |\mathcal{C}_n(x)|^{1-c\delta} i^{-d-\alpha(d-1)},
\]
hence by \eqref{delta}

$$\mu(D) = \sum_{i=l_1}^{l_2} \mu(\Delta_{n+1}(y_i)) \leq C_5^{n+1} |\mathcal{C}_n(x)|^{(1-2c\delta)s}
\sum_{i=l_1}^{l_2} i^{-(d+\alpha(d-1))s(1-c\delta)}.$$

Note that

\[
\sum_{i=l_1}^{l_2} i^{-(d+\alpha(d-1))s(1-c\delta)} \leq l_2^{(d+\alpha(d-1))sc\delta} \cdot
\sum_{i=l_1}^{l_2} i^{-(d+\alpha(d-1))s} \leq |D|^{-c\alpha\delta} \cdot \sum_{i=l_1}^{l_2} i^{-(d+\alpha(d-1))s}.
\]


Thus we have that

\begin{eqnarray*}
\log \mu(D) &\leq& \log \left(|\mathcal{C}_n(x)|^{(1-2c\delta)s} \sum_{i=l_1}^{l_2} i^{-(d+\alpha(d-1))s}\right)- c\alpha\delta\log |D| +\mathit{o}(\log(|D|))\\
&\approx& \log((l_1^{-(d+\alpha(d-1))s+1} - l_2^{-(d+\alpha(d-1))s+1})|\mathcal{C}_n(x)|^{(1-2c\delta)s})-c\alpha\delta \log |D|+\mathit{o}(\log(|D|))\\
&=& \log((l_1^{-(d-1)s} - l_2^{-(d-1)s})
|\mathcal{C}_n(x)|^s)+\mathit{o}(\log(|D|))
\end{eqnarray*}

where we use that

\[
(d+\alpha(d-1))s-1=(d-1)s.
\]
By the concavity of function $x\to x^s$ for $s<1$, we have that
\[
a=b^s \wedge\ c=d^s \implies (a-c)\leq (b-d)^s.
\]
Hence we can conclude that
\[
\log(\mu(D))\leq s(1-(3+\alpha)c\delta)\log|D|+\mathit{o(\log |D|)}
\]
and the proof is complete. \qed

\section{Proof of Theorem \ref{existence}}

We start by fixing an increasing function
$\Phi:\N\to\N$ and $d>1$. We need to find a $d$-decaying system $\{f_i\}_{i=1}^{\infty}$ such
that
\[
\dim_H X_\Phi = \frac 1 d.
\]
We will fix $\varepsilon >0$. As in section \ref{2}, we define by $l(n)$
the smallest number for which
\[
\sum_{i=\Phi(n)+1}^{l(n)} C^{1/d - \varepsilon} i^{-1+
d\varepsilon} \geq 1.
\]
We define $l_1=1$ and $l_{n+1}=l(l_n)$. As in Lemma
\ref{lem:ln},
we have that
\[
l_{n+1} < \gamma \Phi(l_n)
\]
for some $\gamma >1$.

The system will be piecewise linear of the form, $T_i(x)=\frac C {i^d} x +
a_i$. We will have that
$$C=\frac{1}{\sum_{i=1}^{\infty}i^{-d}+\sum_{n=1}^{\infty}n^{-2}l_{n+1}^{-1}(l_{n+1}-\Phi(l_n))}.$$
We define the constants $a_i$ recursively by letting $a_1=1-ci^{-d}$ and let

$$
a_n=\left\{\begin{array}{lll}a_{n-1}-Cn^{-d}&\text{ if }&n\notin (\Phi(l_n),l_{n+1})\text{ for any }n\in\N\\
a_{n-1}-Cn^{-d}-Cj^{-2}l_{j+1}^{-1}&\text{ if }&n\in
(\Phi(l_j),l_{j+1})\text{ for some }j\in\N\end{array}\right..
$$

As in section \ref{2} and Lemma \ref{measure} we can define

\begin{equation} \label{pogis}
\tilde{X}_\Phi = \{x: \Phi(l-{n-1})+1 \leq a_n(x) \leq l_n(x)\}
\end{equation}

and distribute on $\tilde{X}_\Phi$ a measure $\nu$ such that

\[
\nu(\mathcal{C}_n(x)) \leq |\mathcal{C}_n(x)|^{(1/d - \varepsilon)}
\]
for all $x\in \tilde{X}_\Phi$. For $x\in \tilde{X}_{\Phi}$ let
$Z_n(x)$ denote the minimal interval containing
$\mathcal{C}_n(x)\cap \tilde{X}_\Phi$. We can calculate

\begin{eqnarray*}
|Z_n(x)|&\approx& |\mathcal{C}_n(x)|\left(Cn^{-2}l_{n+1}^{-1}(l_{n+1}-\Phi(l_n))+\sum_{i=\Phi(l_n)}^{l_{n+1}}i^{-d}\right)\\
&\approx& n^{-2} |\mathcal{C}_n(x)|
\end{eqnarray*}
We can calculate that for any cylinder $\mathcal{C}_n(x)$, $i\neq
j\in (\Phi(l_n),l_n+1]$ that the cylinders
$\mathcal{C}_{n+1}(y_i)$ and $\mathcal{C}_{n+1}(y_j)$ will be
separated by a gap of length at least $Cn^{-2}l_{n+1}^{-1}$.

Hence, for $r_{n+1}<r\leq r_n$

\[
\mu(B_r(x)) \leq g_n(r)=\left( 1+c r \frac {n^2
(l_{n+1}-\Phi(l_n))} {r_n}\right) r_{n+1}^{1/d-\varepsilon}
\]

(where $r_n=|\mathcal{C}_n(x)|$). Note that

\[
g_n(r)\leq cr^{1/d-\varepsilon}
\]
for $r=r_n$ and for $r=r_{n+1}$, and $g_n(r)$ is a linear function
in-between. As $x\to x^{1/d}$ is a concave function, we have

\[
g_n(r) < cr^{1/d-\varepsilon}
\]
for $r_{n+1}<r< r_n$. Hence,

\[
\liminf_{r\to 0} \frac {\log \mu(B_r(x))} {\log r} \geq \frac 1 d
-\varepsilon
\]
and the proof is complete.
\qed

\end{document}